\documentclass[11pt,reqno]{amsart}
\usepackage[utf8]{inputenc}
\usepackage{a4wide}
\usepackage{appendix}

\usepackage{graphicx}
\usepackage{enumerate}
\usepackage{hyperref}
\usepackage{cleveref}
\usepackage{amsmath,amsopn,amssymb,amsfonts,stmaryrd}
\usepackage{verbatim}
\usepackage{todonotes}
\usepackage{amsthm}
\usepackage{mathtools}
\usepackage{color}
\usepackage{enumitem}
\usepackage[framemethod=TikZ]{mdframed}
\usepackage{bbm}
\usepackage{mathrsfs}
\usepackage{booktabs}
\usepackage{caption}
\usepackage{bm}
\usepackage{tensor}

\makeatletter
\@namedef{subjclassname@2020}{%
	\textup{2020} Mathematics Subject Classification}
\makeatother

\newcommand{\IR}{{\mathbb R}}
\newcommand{\IC}{{\mathbb C}}

\newcommand{\IZ}{{\mathbb Z}}

\newcommand{\CC}{\mathcal{C}}

\renewcommand{\Im}{{\rm Im}}

\newcommand{\dist}{\mathrm{dist}}

\renewcommand{\O}{\mathrm{O}}

\newcommand{\Lie}{\mathrm{Lie}}
\newcommand{\Isom}{\mathrm{Isom}}

\newcommand{\C}{\mathbb C}

\theoremstyle{plain}
\newtheorem{thm}{Theorem}[section]
\newtheorem{cor}[thm]{Corollary}
\newtheorem{lem}[thm]{Lemma}
\newtheorem{prop}[thm]{Proposition}
\newtheorem{conj}[thm]{Conjecture}

\newtheorem{rem}[thm]{Remark}

\theoremstyle{definition}

\newtheorem{defn}[thm]{Definition}

\numberwithin{equation}{section}

\def\a{\alpha}

\def\d{\delta}

\def\k{\kappa}

\def\a{\alpha}

\def\d{\delta}

\def\k{\kappa}

\def\CC{{\mathbb C}}
\def\RR{{\mathbb R}}

\def\ZZ{{\mathbb Z}}
\def\d{{\mathrm{d}}}

\def\id{\mathrm{id}}

\def\SS{{\mathbb{S}}}
\def\S{{\mathbb{S}^{2n-1}}}

\def\i{{\mathrm{i}}}

\def\Re{{\mathrm{Re}}}
\def\Im{{\mathrm{Im}}}

\def\Diff{\mathrm{Diff}}

\newcommand{\gc}{\gamma}
\newcommand{\dgc}{\dot{\gc}}

\def\bar{\overline}

\newcommand{\comma}[0]{\, , \,}

\newcommand{\set}[2]{\left\{ \, #1  \, | \,  #2 \, \right\}}

\newcommand{\Unitary}[0]{\mathrm{U}}

\makeatletter
\newcommand{\vast}{\bBigg@{2}}
\newcommand{\Vast}{\bBigg@{5}}
\makeatother

\newcommand{\RNum}[1]{\uppercase\expandafter{\romannumeral #1\relax}}
\allowdisplaybreaks

\title[Topics in Magnetic Geometry]{Topics in Magnetic Geometry:\\ interpolation, intersections and integrability} 

\author{Lina Deschamps}
\address{Faculty of Mathematics and Computer Science,
	University of Heidelberg,
	Im Neuenheimer Field 205,
	69120 Heidelberg, Germany}
\email{ldeschamps@mathi.uni-heidelberg.de}

\author{Levin Maier}
\address{Faculty of Mathematics and Computer Science,
	University of Heidelberg,
	Im Neuenheimer Field 205,
	69120 Heidelberg, Germany}
\email{lmaier@mathi.uni-heidelberg.de}

\author{Tom Stalljohann}
\address{Faculty of Mathematics and Computer Science,
	University of Heidelberg,
	Im Neuenheimer Field 205,
	69120 Heidelberg, Germany}
\email{tstalljohann@mathi.uni-heidelberg.de}

\keywords{}

\subjclass[2020]{37J35,37J55, 53D25}

\begin{document}
	\maketitle
\renewcommand{\abstractname}{Abstract}
\begin{abstract}
This paper develops new links between contact geometry, magnetic dynamics, and symmetry in exact magnetic systems.

First, we establish an interpolation property for Killing magnetic systems on contact manifolds under an additional condition. Specifically, we show that the corresponding magnetic geodesic flow interpolates smoothly between the sub-Riemannian geodesic flow on the contact distribution and the flow of the vector field associated with a primitive of the magnetic field.

Second, we show that Hamiltonian group actions associated with the magnetomorphism group produce Poisson-commuting integrals of motion for the magnetic flow.

Finally, we obtain new structural results on totally magnetic submanifolds, showing that fixed-point sets of magnetomorphisms and intersections of totally magnetic submanifolds are again totally magnetic. The latter two results may be viewed as extensions of classical phenomena from Riemannian geometry to magnetic geometry.
\end{abstract}
	

\section{Introduction}
\label{section: Introduction}

After the pioneering work of Arnold~\cite{ar61} in the 1960s, which placed the motion of a charged particle in a magnetic field into the framework of modern dynamical systems, magnetic flows have attracted considerable attention; see, for instance, \cite{AbbMacMazzPat17,Abbondandolo2015,Assenza24,AssBenLust16,CIPP,Fat97,Man,Me09,Sor,Tai83,Tai92a,Tai92b}.\medskip

\paragraph{\textbf{Magnetic geodesic systems.}}
Let $(M,g)$ be a closed, connected Riemannian manifold and $\sigma\in\Omega^2(M)$ a closed two-form. The form $\sigma$ is called a \emph{magnetic field}, and the triple $(M,g,\sigma)$ is called a \emph{magnetic system}. It determines a skew-symmetric bundle endomorphism $Y\colon TM\to TM$, the \emph{Lorentz force}, by
\begin{equation}\label{e:Lorentz}
	g_q\left(Y_qu,v\right)=\sigma_q(u,v),\qquad \forall\, q\in M,\ \forall\,u,v\in T_qM.
\end{equation}
A smooth curve $\gamma\colon \RR\to M$ satisfying
\begin{equation}\label{e:mg}
	\nabla_{\dot\gamma}\dot\gamma= Y_{\gamma}\dot\gamma
\end{equation}
is called a \emph{magnetic geodesic} of $(M,g,\sigma)$. Here $\nabla$ denotes the Levi-Civita connection of $g$. When $\sigma=0$, this reduces to the usual geodesic equation. A central question is therefore to understand similarities and differences between standard and magnetic geodesics.

Since $Y$ is skew-symmetric, magnetic geodesics preserve the kinetic energy
\[
E(\gamma,\dot\gamma):=\tfrac12 g_\gamma(\dot\gamma,\dot\gamma),
\]
and hence have constant speed. This reflects the Hamiltonian nature of the system. Indeed, the \emph{magnetic geodesic flow} is defined by
\[
\varPhi_{g,\sigma}^t\colon TM\to TM,\quad (q,v)\mapsto \left(\gamma_{q,v}(t),\dot\gamma_{q,v}(t)\right),\quad \forall t\in\RR,
\]
where $\gamma_{q,v}$ is the magnetic geodesic with initial condition $(q,v)$. As shown in~\cite{Gin}, this flow is Hamiltonian with Hamiltonian $E$ and twisted symplectic form
\begin{equation}\label{eq:twisted_symplectic_structure}
    \omega_\sigma = \d\lambda - \pi^*_{TM}\sigma.
\end{equation}
A key difference from the geodesic flow is that magnetic geodesics with different speeds cannot be obtained from one another by reparametrization. This motivates the study of the flow on varying energy levels.\\

\paragraph{\textbf{Magnetic geometry.}}
Magnetic systems admit a geometric interpretation in which classical notions from Riemannian geometry—such as curvature, isometry groups, and totally geodesic submanifolds—have natural magnetic counterparts and interact with the dynamics.

While the notion of magnetic curvature has recently been introduced in~\cite{Assenza24}, our focus is on the relation between symmetry and invariant submanifolds. In~\cite{ABM}, Albers, Benedetti, and the second author introduced \emph{totally magnetic submanifolds} and the \emph{magnetomorphism group}. The former are submanifolds invariant under the magnetic flow; see \Cref{defn: totally magnetic submfds}. The latter consist of diffeomorphisms preserving both $g$ and $\sigma$; see \Cref{defn: magnetomorphism group}. These parallels are summarized in Table~\ref{table:1_comp_riem_magn_geom}, which also highlights the magnetic counterparts established in the present work.

\begin{table}[h]
\centering
\begin{tabular}{|p{0.43\textwidth}|p{0.43\textwidth}|}
\hline
\textbf{Riemannian geometry} & \textbf{Magnetic geometry} \\
\hline
Geodesics & Magnetic geodesics \\
\hline
Curvature & Magnetic curvature~\cite{Assenza24} \\
\hline
Totally geodesic submanifolds & Totally magnetic submanifolds~\cite{ABM} \\
\hline
Fixed-point sets of isometries are totally geodesic submanifolds & Fixed-point sets of magnetomorphisms are totally magnetic submanifolds~(\Cref{prop: totally magnetic submfds - fixed point set}) \\
\hline
Intersections of totally geodesic submanifolds are totally geodesic & Intersections of totally magnetic submanifolds are totally magnetic~(\Cref{prop: totally magnetic submfds - intersections}) \\
\hline
The Lie algebra of the isometry group encodes first integrals of the geodesic flow & The Lie algebra of the magnetomorphism group encodes first integrals of the magnetic geodesic flow~(\Cref{thm:lie_algebra_mag_group_encodes_integrals_of_motion}) \\
\hline
\end{tabular}
\caption{Parallel statements in Riemannian geometry and magnetic geometry, including the results established in this article.}
\label{table:1_comp_riem_magn_geom}
\end{table}

The study of totally magnetic submanifolds has attracted further attention in \cite{Terek_submanifold_compatibility_equations_magnetic,Marshall_Terek_Totally_Magnetic_Hypersurface} and has found applications to nonlinear PDEs, notably the magnetic two-component Hunter--Saxton system~\cite{M24}. This provides additional motivation for the present work.
\medskip

\paragraph{\textbf{Main contributions.}}
We establish the following results:
\begin{itemize}
    \item \textbf{Interpolation property.} For Killing magnetic systems on contact manifolds, we prove that, under a natural compatibility condition, the magnetic geodesic flow interpolates between the sub-Riemannian geodesic flow on $(M,\xi=\ker\alpha)$ and the flow of the vector field dual to $\alpha$; see \Cref{prop: general interpolation}.
    
    \item \textbf{Integrals of motion from symmetry.} For exact magnetic systems, we show that Hamiltonian group actions associated with the magnetomorphism group yield Poisson-commuting, functionally independent integrals of motion, with cardinality at least the dimension of an enclosed 
    torus; see \Cref{thm:lie_algebra_mag_group_encodes_integrals_of_motion} and \Cref{thm: Hamiltonian action of ExMag}.
    
    \item \textbf{Structure of totally magnetic submanifolds.} We prove that fixed-point sets of magnetomorphisms are totally magnetic submanifolds and that intersections of totally magnetic submanifolds remain totally magnetic; see \Cref{prop: totally magnetic submfds - fixed point set} and \Cref{prop: totally magnetic submfds - intersections}.
\end{itemize}\medskip

\paragraph{\textbf{Outline of the article.}}
\Cref{section: Preliminaries} reviews variational aspects of magnetic flows and Mañé’s critical values. \Cref{section: Killing magnetic systems} recalls Killing magnetic systems and establishes the interpolation property, with applications to contact geometry. \Cref{section: Illustration on Ellipsoids} provides examples on complex ellipsoids and derives explicit integrals of motion. \Cref{section: Magnetic Geometry of Hypersurfaces} develops the relation between symmetry and integrability via Hamiltonian group actions. \Cref{subsection: magnetic geometry - abstract results} contains structural results on totally magnetic submanifolds.
\medskip
 
	\noindent \textbf{Acknowledgments:}
The authors are grateful to their advisor Peter Albers for helpful discussions.
The authors acknowledge funding by the Deutsche Forschungsgemeinschaft (DFG, German Research Foundation) – 281869850 (RTG 2229), 390900948 (EXC-2181/1) and 281071066 (TRR 191).  L.D. and L.M. would like to acknowledge the excellent working
conditions and interactions at Erwin Schrödinger International
Institute for Mathematics and Physics, Vienna, during the thematic
programme \emph{``Infinite-dimensional Geometry: Theory and Applications"}
where part of this work was completed.

\section{Preliminaries}
\label{section: Preliminaries}

\subsection{Variational description of magnetic geodesic flows}\label{ss: Intermezzo magnetic systems}

In the case of an exact magnetic system \((M,g,\d\alpha)\), the magnetic
geodesic flow admits a Lagrangian description, and hence a variational
formulation. The associated magnetic Lagrangian is
\[
L \colon TM \to \mathbb{R},
\qquad
L(q,v):=\tfrac{1}{2}|v|^2-\alpha_q(v).
\]
The magnetic geodesic flow \(\Phi^t_{g,\d\alpha}\) coincides with the
Euler--Lagrange flow \(\Phi_L\) associated with the magnetic Lagrangian \(L\);
see~\cite{Gin}. More precisely, a curve \(\gamma\colon [0,T]\to M\) is a
magnetic geodesic if and only if it is a critical point of the action
functional
\[
S_L(\gamma):=\int_0^T L(\gamma(t),\dot\gamma(t))\,\d t
\]
among all smooth curves \(\delta\colon [0,T]\to M\) satisfying
\[
\delta(0)=\gamma(0),
\qquad
\delta(T)=\gamma(T).
\]

This variational principle prescribes the time interval \([0,T]\), while the
energy of \(\gamma\) is left free. On the other hand, for a fixed energy level
\(\kappa\in\mathbb{R}\), a curve \(\gamma\) is a magnetic geodesic of energy
\(\kappa\) if and only if it is a critical point of the free-time action
functional \(S_{L+\kappa}\), where both the curve and the travel time are
allowed to vary. More precisely, one considers variations among smooth curves
\(\delta\colon [0,T']\to M\) with arbitrary \(T'>0\) such that
\[
\delta(0)=\gamma(0),
\qquad
\delta(T')=\gamma(T).
\]

We conclude this subsection by recalling that a magnetic geodesic \(\gamma\) of
\((M,g,\d\alpha)\) is said to have \emph{negative action} if
\[
S_{L+\kappa}(\gamma)<0,
\]
where \(L\) is the magnetic Lagrangian of \((M,g,\d\alpha)\) and \(\kappa\) is
the energy of \(\gamma\).

\subsection{Mañé's critical values}

This variational formulation underlies the definition of \emph{Mañé's critical
values}, introduced in the seminal works~\cite{CIPP,Man}. These quantities may
be interpreted as energy levels at which significant dynamical and geometric
transitions occur in the Euler--Lagrange flow associated with the magnetic
Lagrangian \(L\). For simplicity, we restrict here to the definition of the Mañé critical value,
defined by
\begin{equation}\label{eq: strict mane value}
    c(L):=
    \inf\left\{
    \kappa\in\mathbb{R}
    \,\middle|\,
    S_{L+\kappa}(\gamma)\geq 0
    \text{ for all } T>0
    \text{ and all }
    \gamma\in C^\infty(\mathbb{R}/T\mathbb{Z},M)
    \right\}.
\end{equation}
For an overview of other Mañé critical values, we refer to~\cite{Abbo13Lect}
and the references therein. Finally, we note that the Mañé critical value can
also be defined for non-exact magnetic fields, following
\cite{CFP10,Me09}, although this generalization lies beyond the scope of the
present paper.

\subsection{Background in magnetic geometry.}
In Riemannian geometry, the study of totally geodesic submanifolds is a classical topic. In \cite{ABM}, Albers, Benedetti, and the second author initiated the analogous study in magnetic geometry by introducing the notion of so-called totally magnetic submanifolds. The reader may think of them as the analogue of totally geodesic submanifolds for magnetic geodesic flows, in the sense that they are precisely the submanifolds that are invariant under the magnetic geodesic flow, just as totally geodesic submanifolds are precisely the submanifolds invariant under the geodesic flow. 

We begin by recalling the notion of a totally magnetic submanifold.

\begin{defn}[{Totally magnetic submanifold, \cite[Def. 6.5]{ABM}}]
\label{defn: totally magnetic submfds}
Let $(M,g,\sigma)$ be a magnetic system. An (embedded) submanifold $N \subseteq M$ is called \textit{totally magnetic} if for every magnetic geodesic $\gamma : I \to M$ with $\gamma(0) \in N$ and $\dot{\gamma}(0) \in T_{\gamma(0)}N$, there exists $\varepsilon > 0$ such that $\gamma((-\varepsilon,\varepsilon)) \subseteq N$.
\end{defn}
Before proceeding, we recall the following useful criterion for determining whether a submanifold is totally magnetic.
\begin{thm}[\cite{ABM}, Thm.~1.4]
\label{t:totally_magnetic_submanifolds_characterization_ABM}
Let $(M,g,\sigma)$ be a magnetic system with Lorentz force $Y \colon TM \to TM$, and let $N \subseteq M$ be a submanifold. Denote by $i \colon N \hookrightarrow M$ the inclusion.
The following are equivalent:
\begin{enumerate}[label=(\roman*)]
    \item $N$ is a totally magnetic submanifold of $(M,g,\sigma)$.
    \item Every magnetic geodesic $\gamma \colon I \to N$ of $(N, i^*g, i^*\sigma)$ is also a magnetic geodesic of $(M,g,\sigma)$.
    \item $N$ is a totally geodesic submanifold of $(M,g)$ and $Y_p(T_pN) \subseteq T_pN$ for all $p \in N$.
\end{enumerate}
\end{thm}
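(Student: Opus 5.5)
We prove the cycle (iii)$\Rightarrow$(ii)$\Rightarrow$(i)$\Rightarrow$(iii). Throughout, $\nabla$ is the Levi-Civita connection of $g$ and $\nabla^N$ that of $i^*g$. For $p\in N$ and $u,v\in T_pN$ we use the Gauss formula $\nabla_u V=\nabla^N_u V+\mathrm{II}(u,V)$, where $V$ is a tangent extension of $v$. We also need the induced Lorentz force $Y^N$ of $(N,i^*g,i^*\sigma)$. For $u,w\in T_pN$,
\[
g(Y^N_p u,w)=\sigma_p(u,w)=g(Y_pu,w),
\]
so $Y^N_p u=(Y_pu)^{\top}$, the orthogonal projection of $Y_pu$ onto $T_pN$.

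\textbf{(iii)$\Rightarrow$(ii).} Let $\gamma$ be a magnetic geodesic of $N$. Along $\gamma$ the Gauss formula gives $\nabla_{\dot\gamma}\dot\gamma=\nabla^N_{\dot\gamma}\dot\gamma+\mathrm{II}(\dot\gamma,\dot\gamma)$. Since $N$ is totally geodesic, $\mathrm{II}=0$. Since $Y(TN)\subseteq TN$, we have $Y^N\dot\gamma=Y\dot\gamma$. Therefore $\nabla_{\dot\gamma}\dot\gamma=Y^N\dot\gamma=Y\dot\gamma$, and $\gamma$ is a magnetic geodesic of $M$.

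\textbf{(ii)$\Rightarrow$(i).} Take $(p,v)\in TN$. Let $\gamma_N$ be the magnetic geodesic of $N$ with initial data $(p,v)$; it is defined on some interval $(-\varepsilon,\varepsilon)$. By (ii), $\gamma_N$ is a magnetic geodesic of $M$. By uniqueness of solutions of the second-order ODE \eqref{e:mg}, it agrees with $\gamma$ on $(-\varepsilon,\varepsilon)$. Hence $\gamma((-\varepsilon,\varepsilon))\subseteq N$.

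\textbf{(i)$\Rightarrow$(iii).} This is the main step, because we must separate the geodesic and magnetic contributions. Fix $p\in N$ and $v\in T_pN$, and let $\gamma$ be the $M$-magnetic geodesic through $(p,v)$. By (i), $\gamma$ stays in $N$ for short time, so its acceleration splits as
\[
Y\dot\gamma=\nabla_{\dot\gamma}\dot\gamma=\nabla^N_{\dot\gamma}\dot\gamma+\mathrm{II}(\dot\gamma,\dot\gamma).
\]
Taking normal components at $t=0$ gives
\[
(Y_pv)^{\perp}=\mathrm{II}_p(v,v)\qquad\text{for all } v\in T_pN.
\]
Now replace $v$ by $-v$; this is legitimate because (i) holds for every tangent initial vector. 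The left side is linear in $v$ and so changes sign, while the right side is quadratic in $v$ and does not. Hence both sides vanish, giving $\mathrm{II}_p(v,v)=0$ and $(Y_pv)^\perp=0$ for every $v$. Polarization then yields $\mathrm{II}\equiv0$, so $N$ is totally geodesic. The vanishing of $(Y_pv)^\perp$ gives $Y_p(T_pN)\subseteq T_pN$.

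This odd/even parity trick is the key point of the argument. Without it, one would need to rescale speeds, using the fact that speed scaling affects $\mathrm{II}$ quadratically but $Y$ only linearly, to separate the two terms.
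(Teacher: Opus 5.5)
Your proof is correct and complete. The identification $Y^N_pu=(Y_pu)^{\top}$, the Gauss-formula decomposition, ODE uniqueness for (ii)$\Rightarrow$(i), and the sign flip $v\mapsto -v$ in (i)$\Rightarrow$(iii) all check out. The sign flip is legitimate because (i) quantifies over all tangent initial vectors, even though the magnetic geodesic with initial data $-v$ is not the time-reversal of the one with initial data $v$. The only unstated ingredient is embeddedness of $N$. You need it to know that $\gamma|_{(-\varepsilon,\varepsilon)}$ is smooth as a curve into $N$, so that the Gauss formula applies; this is covered by the convention in \Cref{defn: totally magnetic submfds}.

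There is no in-paper argument to compare against. The paper does not prove this statement; it quotes it from \cite[Thm.~1.4]{ABM} and uses it as a black box, for instance in the proof of \Cref{prop: totally magnetic submfds - fixed point set}. Your parity trick is also not a genuinely different mechanism from the rescaling argument you mention. It is the case $s=-1$ of the identity $s\,(Y_pv)^{\perp}=s^{2}\,\mathrm{II}_p(v,v)$, obtained by applying your normal-component equation to $sv$.
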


Typically, totally geodesic submanifolds are classified up to isometry. For magnetic systems, the analogous notion is a structure-preserving map of magnetic systems, namely a \emph{magnetomorphism}, which is defined as follows.

\begin{defn}[{Magnetomorphism group, \cite[Def.~6.2]{ABM}}]
\label{defn: magnetomorphism group}
Given two magnetic systems $(M_1,g_1,\sigma_1)$ and $(M_2,g_2,\sigma_2)$, the two systems are called \emph{magnetomorphic} if there exists a diffeomorphism $\varPhi: M_1 \to M_2$ such that
\[
\varPhi^*g_2=g_1 \quad \text{and} \quad \varPhi^*\sigma_2=\sigma_1\,.
\]Such a map is called a \emph{magnetomorphism}.\\

\noindent For a magnetic system $(M,g,\sigma)$, the \textit{group of magnetomorphisms} is
\begin{align*}
    \mathrm{Mag}(M,g,\sigma)
    :=
    \set{\psi \in \mathrm{Diff}(M)}{\psi^*g = g \text{ and } \psi^* \sigma = \sigma}\, .
\end{align*}
\end{defn}

\subsection{Contact geometry}
Natural choices for magnetic forms are symplectic forms or contact forms, where the latter arise in contact geometry, the odd-dimensional analogue of symplectic geometry. We begin by recalling the basic notions. 

A \emph{contact manifold} is a pair \((M,\alpha)\), where
\(M\) is a closed \((2n+1)\)-dimensional manifold and
\(\alpha\in\Omega^1(M)\) is a \(1\)-form such that
\[
\alpha\wedge (\d\alpha)^n \neq 0
\]
everywhere on \(M\). There exists a unique vector field \(R\), called the
\emph{Reeb vector field}, such that
\[
\alpha(R)=1,
\qquad
\iota_R \d\alpha = 0.
\]
The contact form \(\alpha\) gives rise to the contact distribution $\xi:=\ker \alpha$,
which is a completely non-integrable hyperplane distribution. Moreover, one has
the splitting
\begin{equation*}\label{eq:splitting_of_TM_into_xi_R}
    TM=\xi\oplus\langle R\rangle_{\RR}
\end{equation*}
and the kernel of \(\d\alpha\) is precisely the line bundle generated
by \(R\), that is,
\begin{equation*}\label{eq: kernel_of_alpha}
    \ker \d\alpha= \langle R\rangle_{\RR}.
\end{equation*}

For more details on contact geometry we refer the reader to~\cite{Gg08}. 
It is natural to consider contact structures that are compatible with a Riemannian metric. This leads, in particular, to the notion of a \(K\)-contact structure, which links contact geometry with Riemannian geometry; see~\cite{Blair}.\\

A \emph{\(K\)-contact structure} on \(M\) is a pair \((g,\alpha)\), where
\(\alpha\) is a contact form and \(g\) is a Riemannian metric satisfying the
following properties:
\begin{enumerate}[label=(K\arabic*),ref=K\arabic*]
    \item\label{it:1_K_contact}
    the contact distribution \(\ker\alpha\) is $g$-orthogonal to the Reeb vector
    field \(R\);
    \item\label{it:2_K_contact}
    the Reeb vector field \(R\) is a Killing vector field for \(g\) and has
    constant norm \(r>0\).
\end{enumerate}

\section{Geometry of Killing magnetic systems}
\label{section: Killing magnetic systems}
This section focuses on Killing magnetic systems, and illustrates the strong geometric constraints on magnetic trajectories imposed by the Killing condition. We show an interpolation property of the magnetic geodesic flow on Killing magnetic systems, which is of independent interest, particularly considering applications to $K$-contact manifolds.\\

We recall the definition of a Killing magnetic system introduced in~\cite{ABM}.

\begin{defn}[{\cite[Def. 2.1.]{ABM}}]
\label{defn: Killing}
An exact magnetic system $(M,g,\d\alpha)$ on a closed manifold $M$ is called \emph{Killing} if the $g$-dual vector field $X$ of $\alpha$ is a Killing vector field, that is, the flow of $X$ acts by isometries on $(M,g)$. 
In this case, we say that $\alpha$ is Killing with respect to $g$, and that the associated magnetic Lagrangian $L$ is Killing.
\end{defn}

\subsection{The Interpolation for Killing Magnetic Systems}
\label{section: Interpolation Property for Killing Magnetic Systems}

In this subsection, we study an \emph{interpolation property}, generalizing~\cite[Cor.~1.8]{ABM}. We characterize the Killing magnetic systems whose flow interpolates between a sub-Riemannian geodesic flow and a Reeb flow. For background on sub-Riemannian geometry see~\cite{Montgomereysubriembook}.

This property can be detected by the angle between $X$, the vector field dual to the magnetic $1$-form $\alpha$, and the Reeb direction along magnetic geodesics. We show that it holds precisely when the norm of $X$ stays constant along magnetic geodesics.

\begin{thm}[Interpolation property]\label{prop: general interpolation}
Let \((M^{2n+1}, \alpha)\) be a closed contact manifold equipped with a Riemannian metric \(g\), and suppose that the magnetic system \((M, g, \mathrm{d}\alpha)\) is Killing.  If the $g$-metric dual $X$ of $\alpha$ satisfies
\begin{equation}\label{it:prop hyp}
    \d\alpha(X,\cdot)=0 \, ,
\end{equation}
then the $g_{\gamma}(X_{\gamma}, \dot{\gamma})$ is conserved along magnetic geodesics $\gc$ of $(M,g,\d\alpha)$.\\
Moreover, the magnetic geodesic flow of \( (M, g, \mathrm{d}\alpha) \) interpolates smoothly between the sub-Riemannian geodesic flow on the distribution \( \xi := \ker \alpha \) and the flow of \( X \).

More specifically:
\begin{enumerate}
    \item \label{it:prop sR}If \( \gamma \) is a magnetic geodesic with initial velocity \( \dot{\gamma}(0) \in \xi_{\gc(0)} \), then \( \gamma \) is a sub-Riemannian geodesic tangent to the distribution \( \xi \).
    
    \item \label{it:prop 'Reeb'}If \( \gamma \) is a magnetic geodesic with initial velocity \( \dot{\gamma}(0) = r \cdot X_{\gamma(0)} \) for some \( r \in \mathbb{R} \), then \( \gamma \) is a flow line of the vector field \( r \cdot X \).
\end{enumerate}
\end{thm}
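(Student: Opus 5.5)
Since $X$ is Killing, the quantity $g_\gamma(X_\gamma,\dot\gamma)$ is the classical Noether integral; we differentiate it directly. Along a magnetic geodesic,
\[
\tfrac{\d}{\d t}\, g(X,\dot\gamma) = g(\nabla_{\dot\gamma}X,\dot\gamma) + g(X, Y\dot\gamma).
\]
The first term vanishes because $\nabla X$ is skew-symmetric for a Killing field. For the second, the defining relation \eqref{e:Lorentz} with $\sigma = \d\alpha$ gives $g(X,Y\dot\gamma) = -g(Y X,\dot\gamma) = -\d\alpha(X,\dot\gamma)$, which is zero by hypothesis \eqref{it:prop hyp}. Hence $\alpha(\dot\gamma) = g(X,\dot\gamma)$ is constant, which is the first claim.

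We also record two consequences of the hypothesis. First, \eqref{it:prop hyp} says $X \in \ker\d\alpha = \langle R\rangle_\RR$, and $\alpha(X) = |X|^2 > 0$ wherever $X \neq 0$. Since $\alpha$ is contact, $\alpha \neq 0$ everywhere, so $X$ is nowhere zero and $X = |X|^2 R$. Second, $\d|X|^2 = \d(\alpha(X))$ can be computed from the Killing property: $\d(g(X,X))(v) = 2g(\nabla_v X, X) = -2g(\nabla_X X, v)$, so $\nabla_X X = -\tfrac12 \nabla |X|^2$. On the other hand, Cartan's formula gives $\mathcal L_X \alpha = \iota_X\d\alpha + \d(\alpha(X)) = \d|X|^2$. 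But $\mathcal L_X \alpha$ is the metric dual of $\mathcal L_X X = 0$, because $X$ is Killing and therefore $\mathcal L_X g = 0$. Thus $\d|X|^2 = 0$, so $|X|$ is constant and $\nabla_X X = 0$.

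For item~\eqref{it:prop 'Reeb'}, take $\gamma(t)$ to be the flow line of $rX$ through $\gamma(0)$. Then $\nabla_{\dot\gamma}\dot\gamma = r^2 \nabla_X X = 0$, and $Y\dot\gamma = rYX = 0$ because $g(YX,\cdot) = \d\alpha(X,\cdot) = 0$. So the flow line solves \eqref{e:mg} with the given initial data, and by uniqueness it is the magnetic geodesic.

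For item~\eqref{it:prop sR}, the conserved quantity shows that $\alpha(\dot\gamma(t)) = \alpha(\dot\gamma(0)) = 0$ for all $t$, so $\gamma$ stays tangent to $\xi$. It remains to identify $\gamma$ as a sub-Riemannian geodesic for $g|_\xi$, and we expect this to be the main step. The approach is variational. The magnetic geodesic is a critical point of $S_L$ with fixed endpoints. Restricted to horizontal curves, $L = \tfrac12|v|^2$ because $\alpha$ vanishes on $\xi$. A critical point of $S_L$ among all curves is in particular critical among horizontal curves with the same endpoints and time interval, so $\gamma$ is critical for the sub-Riemannian energy. This makes it at least an extremal, and we need a normal geodesic. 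To get one, we would instead check Hamilton's equations: set $p = g(\dot\gamma,\cdot)$ restricted appropriately. Since $X \parallel R$ is orthogonal to $\xi$ (as $\alpha = g(X,\cdot)$ kills $\xi$), the metric $g$ has $\xi \perp X$ with $|X|$ constant. The sub-Riemannian Hamiltonian is then $H_{sR}(p) = \tfrac12|p|^2_g - \tfrac{1}{2|X|^2}p(X)^2$. We would show that $\gamma$ together with the covector $g(\dot\gamma,\cdot) - \alpha = g(\dot\gamma,\cdot)$ on $\xi$, corrected by a constant multiple of $\alpha$, solves these equations. This works because $p(X)$ is conserved, by the same Noether computation, and the correction term generates the magnetic force $Y\dot\gamma$ via $\d\alpha$. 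Concretely, the flow of $\tfrac12|p|^2_g$ on $T^*M$ with canonical form differs from the flow of $H_{sR}$ by the Hamiltonian vector field of $\tfrac{1}{2|X|^2}p(X)^2$. That vector field equals $\tfrac{p(X)}{|X|^2}$ times the lift of the Killing flow, which is a twisted momentum shift. Matching this with $\omega_\sigma$ from \eqref{eq:twisted_symplectic_structure} on the level $p(X) = \text{const}$ is the computation we expect to be the main obstacle.
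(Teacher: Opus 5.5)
Your conservation law and item~(2) are correct, and more direct than the paper's. The paper kills $g(X,Y\dot\gamma)$ by combining $Y=2\nabla X$ with the constancy of $|X|$, both imported from ABM; you read it off as $-\d\alpha(X,\dot\gamma)=0$. For (2), instead of the paper's angle argument, you verify that the flow line of $rX$ solves \eqref{e:mg} (via $\nabla_XX=0$ and $YX=0$) and invoke ODE uniqueness, which also covers $r\le 0$ without fuss.

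The genuine gap, which you flag yourself, is item~(1). You stop at ``$\gamma$ is a critical point of the sub-Riemannian energy among horizontal curves with fixed endpoints'' and leave the identification with a sub-Riemannian geodesic as an unfinished computation. So (1) is not proved as written. The step you do have is exactly the paper's argument: the paper simply asserts that the horizontal magnetic geodesic minimizes the horizontal action and concludes. Only one closing fact is missing, and either of the following supplies it.

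\emph{First route.} A contact distribution has no nonconstant abnormal curves. An abnormal covector along $\gamma$ would be $f\alpha$ with $f\neq 0$, and the abnormality condition reduces to $f\,\iota_{\dot\gamma}\d\alpha|_\xi=0$. This forces $\dot\gamma=0$, since $\d\alpha|_\xi$ is nondegenerate. Hence the endpoint map is a submersion at $\gamma$, the Lagrange multiplier rule makes your critical point a normal extremal, and normal extremals are locally minimizing.

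\emph{Second route.} Your Hamiltonian approach closes in a few lines if you use the canonical momentum $p=g(\dot\gamma,\cdot)-\alpha$ with the canonical symplectic form on $T^*M$, rather than $\omega_\sigma$. Set $K=\tfrac12|p|_g^2$, $P_X(p)=p(X)$ and $c=|X|$, which is constant as you showed. Then the magnetic Hamiltonian is $H=\tfrac12|p+\alpha|_g^2=K+P_X+\tfrac12c^2$, while $H_{sR}=K-\tfrac{1}{2c^2}P_X^2$.

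Both Poisson-commute with $P_X$, because $\{K,P_X\}=0$ by the Killing property. On the invariant level set $\{P_X=-c^2\}$ one has $\d H_{sR}=\d K-\tfrac{P_X}{c^2}\,\d P_X=\d K+\d P_X=\d H$, so the two Hamiltonian vector fields coincide there.

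Since $p(X)=g(\dot\gamma,X)-c^2$, horizontality of $\dot\gamma(0)$ is precisely $P_X=-c^2$. Hence the magnetic geodesic is the projection of an $H_{sR}$-trajectory, i.e.\ a normal sub-Riemannian geodesic.
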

\begin{rem}
    As a special case of \Cref{prop: general interpolation}, we recover the recent result~\cite[Cor. 1.8]{ABM} of the second author, which provides the first example of a higher-dimensional magnetic system having the interpolation property.
\end{rem}
\begin{rem}
Such an interpolation phenomenon is specific to magnetic dynamics: for the ordinary geodesic flow of a Riemannian metric, there is in general no analogous mechanism interpolating between a sub-Riemannian geodesic flow on a hyperplane distribution and the flow of a distinguished vector field.
\end{rem}

\begin{proof}
We begin by showing that \( g_{\gamma}(X_{\gamma}, \dot{\gamma}) \) is a conserved quantity along any magnetic geodesic \( \gamma \) of \( (M, g, \mathrm{d}\alpha) \).

By differentiating \( g_{\gamma}(X_{\gamma}, \dot{\gamma}) \) with respect to \(t\) and using that \(\gamma\) is a magnetic geodesic of \((M,g,\d\alpha)\), hence a solution of \eqref{e:mg}, we obtain
\begin{equation}\label{eq: derivative of contact angle}
    \frac{\mathrm{d}}{\mathrm{d} t} g_{\gamma}(X_{\gamma}, \dot{\gamma}) 
    = g_{\gamma}\left(\nabla_{\dot{\gamma}} X_{\gamma}, \dot{\gamma} \right) 
    + g_{\gamma}\left(X_{\gamma}, Y_{\gamma}(\dot{\gamma}) \right).
\end{equation}
Since the magnetic system \((M,g,\d\alpha)\) is Killing, we have, by \cite[S.
2.2]{ABM},
\begin{equation}\label{eq: Y=2dX}
    Y = 2\nabla X \, .
\end{equation}
Next, combining \eqref{eq: Y=2dX} with the fact that $\dot{\gamma}$ is perpendicular to $Y_\gamma(\dot{\gamma}) $ due to the skew-symmetry of the Lorentz force, we conclude that the first term on the right-hand side of \eqref{eq: derivative of contact angle} vanishes.
Using \eqref{eq: Y=2dX} again, the second term is equal to
\begin{equation}\label{eq:2nd term}
2 \cdot g_{\gamma}(X_{\gamma}, \nabla_{\dot{\gamma}} X) = \frac{\mathrm{d}}{\mathrm{d}t} \, g_{\gamma}(X_{\gamma}, X_{\gamma}) = \frac{\mathrm{d}}{\mathrm{d}t} \lvert X_{\gamma} \rvert_g^2 \, .
\end{equation}
By~\cite[Lemma 2.2]{ABM}, the assumption \eqref{it:prop hyp} on the dual vector field $X$ is equivalent to the norm \( \lvert X \rvert_g \) being constant on $M$.
Hence, the second term \eqref{eq:2nd term} vanishes as well, which concludes the proof that $g_{\gamma}(X_{\gamma}, \dot{\gamma})$ is an integral of motion.
\medskip

\textit{We now prove the interpolation property.} Let $\gamma$ be a magnetic geodesic with initial velocity $\dgc(0)\in \xi_{\gc(0)}=\ker\alpha_{\gc(0)}$, i.e., $\gamma$ is as in (\ref{it:prop sR}). Since $X$ is $g$-metric dual of $\a$ with respect to $g$, by definition, $X$ is $g$-orthogonal to the distribution $\xi:=\ker\alpha$. Since $g_{\gamma}(X_{\gamma}, \dot{\gamma})$ is an integral of motion, the velocity of $\gamma$ is horizontal, i.e., $\dgc\in \xi_{\gc}$.
As a horizontal magnetic geodesic, $\gamma$ is a minimizer of the action $S$ among all smooth curves such that $\dgc \in \xi$, where 
\[
	S(\gamma)= \int_0^T \frac{1}{2}g_{\gc(t)}(\dgc(t),\dgc(t)) \, \d t
\]
Thus, $\gamma$ is a sub-Riemannian geodesic in $(M, \xi=\ker \alpha)$.

Now let \( \gamma \) be a magnetic geodesic with initial velocity \( \dot{\gamma}(0) = r \cdot X_{\gamma(0)} \) for some \( r \in \mathbb{R} \), i.e., $\gamma$ is as in (\ref{it:prop 'Reeb'}).
The angle \( \theta(t) \in \IR/2\pi \IZ \) between \( \dot{\gamma}(t) \) and \( X_{\gamma(t)} \) is defined via the relation
\begin{equation}\label{eq: def angle veleocity and X}
    g_{\gamma(t)}(X_{\gamma(t)}, \dot{\gamma}(t)) = \cos(\theta(t)) \cdot \lvert \dot{\gamma}(t) \rvert_g \cdot \lvert X_{\gamma(t)} \rvert_g.
\end{equation}
Since both \( \lvert \dot{\gamma}(t) \rvert_g \) and \( \lvert X_{\gamma(t)} \rvert_g \) are constant in time, it follows that \( \cos(\theta(t)) \) is also conserved. From the initial condition \( \dot{\gamma}(0) = r \cdot X_{\gamma(0)} \), we have \( \cos(\theta(0)) = 1 \), hence \( \cos(\theta(t)) = 1 \) for all \( t \). This and that the norms of $\dot{\gamma}(t)$ and $X_{\gamma(t)}$ are conserved respectively imply that
\[
    \dot{\gamma}(t) = r \cdot X_{\gamma(t)} \quad \text{for all } t \in \mathbb{R},
\]
which completes the proof.
\end{proof}

\subsection{Applications of the interpolation property}

This subsection presents $K$-contact manifolds as examples of magnetic systems to which
\Cref{prop: general interpolation} applies.

\begin{cor}
Let \((g,\alpha)\) be a \(K\)-contact structure on a smooth closed manifold
\(M\). Then the magnetic geodesic flow of \((M,g,\d\alpha)\) interpolates
between the sub-Riemannian geodesic flow on \((M,\xi:=\ker\alpha)\) and the
Reeb flow of \((M,\alpha)\).
\end{cor}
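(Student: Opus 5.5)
The plan is to deduce the corollary directly from \Cref{prop: general interpolation}, by checking its hypotheses for the magnetic system \((M,g,\d\alpha)\) built from a \(K\)-contact structure and then identifying the flow of the dual vector field \(X\) with a rescaled Reeb flow.

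\textbf{Step 1: identify \(X\).} Let \(X\) be the \(g\)-dual of \(\alpha\), so \(g(X,\cdot)=\alpha\). By (\ref{it:1_K_contact}), \(R\) is \(g\)-orthogonal to \(\xi=\ker\alpha\). Hence \(g(R,\cdot)\) vanishes on \(\xi\), and so it is a multiple of \(\alpha\). Evaluating on \(R\) gives \(g(R,R)=r^2\) and \(\alpha(R)=1\), so \(g(R,\cdot)=r^2\alpha\). Therefore
\[
X=r^{-2}R ,
\]
where \(r\) is the constant norm from (\ref{it:2_K_contact}).

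\textbf{Step 2: check the hypotheses.} Since \(R\) is Killing and \(r\) is constant, \(X\) is also Killing. Thus \((M,g,\d\alpha)\) is a Killing magnetic system in the sense of \Cref{defn: Killing}. Moreover, \(\iota_X\d\alpha=r^{-2}\iota_R\d\alpha=0\) by the defining property of the Reeb field, which is hypothesis \eqref{it:prop hyp}. The manifold \(M\) is closed and \(\alpha\) is contact by assumption, so \Cref{prop: general interpolation} applies.

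\textbf{Step 3: read off the conclusion.} Part (\ref{it:prop sR}) of the theorem gives directly that magnetic geodesics starting tangent to \(\xi\) are sub-Riemannian geodesics of \((M,\xi)\). For part (\ref{it:prop 'Reeb'}), take initial velocity \(s\,R_{\gamma(0)}\) with \(s\in\RR\). This equals \(sr^2\,X_{\gamma(0)}\), so \(\gamma\) is a flow line of \(sr^2X=sR\). That is, \(\gamma\) is a reparametrized (constant speed \(s\)) Reeb orbit. The smooth interpolation then follows from the conserved quantity \(g(X,\dot\gamma)=r^{-2}\alpha(\dot\gamma)\), which fixes the angle between \(\dot\gamma\) and \(R\) along the flow.

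The only point requiring care is Step 1, namely that orthogonality (\ref{it:1_K_contact}) together with constant norm forces \(X\) to be a constant multiple of \(R\). The rest is a direct application of the theorem, and no genuine obstacle is expected.
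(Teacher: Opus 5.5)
Your proposal is correct and follows the same route as the paper: verify the hypotheses of \Cref{prop: general interpolation} for the $K$-contact system and then apply that theorem. The only difference is that the paper delegates the hypothesis check to the discussion preceding Theorem~2.6 in \cite{ABM}, whereas you carry it out yourself via $X=r^{-2}R$ and also state explicitly that the flow of $X$ is a constant rescaling of the Reeb flow, which the paper leaves implicit.
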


\begin{proof}
By \cite[§2.3]{ABM}, specifically the discussion immediately preceding
\cite[Theorem~2.6]{ABM}, if \((M,g,\alpha)\) is a \(K\)-contact manifold, then
the assumptions of \Cref{prop: general interpolation} are satisfied. The
claimed interpolation statement therefore follows directly from
\Cref{prop: general interpolation}.
\end{proof}

\section{Illustration on Ellipsoids}
\label{section: Illustration on Ellipsoids}

The aim of this section is to illustrate \Cref{section: Killing magnetic systems} by providing an example of a magnetic system on a contact manifold that is Killing, but whose Reeb flow does not act by isometries. This section is divided into three parts. In the first part, \Cref{ss:The spherical point of view}, we approach these systems from a spherical perspective. This allows us to identify the magnetic system on the ellipsoid with a suitable magnetic system on the unit sphere in \Cref{lem:mag_Sphere_ellipsoid}. We then show in \Cref{lemm:mag_sys_Ellipsoid_are_Killing} that this system is Killing, which allows us to compute its Mañé critical value. Next, in \Cref{ss:Integrals of motion of electromagentic Lagrangians}, we use this point of view to construct integrals of motion for the corresponding electromagnetic Lagrangians; see \Cref{Prop:intergrals of motion of electromagnetic ELE on ellipsoid}. Finally, in \Cref{ss:The contact type property of the magnetic system}, we conclude with an application of independent interest, namely to the contact type problem for low-energy surfaces.\\

We begin by introducing the setting. Let $0<a_1\leq \ldots \leq a_n$ be an increasing sequence of real numbers and set $A:=\mathrm{diag}(a_1,\ldots,a_n)$. The associated complex ellipsoid is
\begin{equation}\label{eq:defi_ellipsoid}
    E(A):=\Big\{z\in\IC^n \mid \langle A^{-1}z,z\rangle=1\Big\}
      =\Big\{(z_1,\ldots,z_n)\in\IC^n \mid \tfrac{|z_1|^2}{a_1}+\ldots+\tfrac{|z_n|^2}{a_n}=1\Big\},
\end{equation}
where $\langle\cdot,\cdot\rangle$ denotes the standard Hermitian product on $\IC^n$. The metric $g=\mathrm{Re}\langle\cdot,\cdot\rangle$ is the restriction of the Euclidean metric to $E(A)$. The magnetic potential is the one-form 
\[
\alpha_z(\cdot)=\tfrac12\,\mathrm{Re}\,\langle \i z,\cdot\rangle = \tfrac12\,\mathrm{Im}\,\langle z,\cdot\rangle \, \quad \text{for} \ z\in \CC^n.
\]
In order to study this system in more detail, it is helpful to use the following alternative point of view, which will be useful for constructing integrals of motion for this magnetic system. 
\subsection{The spherical point of view.}\label{ss:The spherical point of view}
We first fix some notation. The matrix $A$ defines the weighted Hermitian product
\begin{equation}\label{eq:weighted_product}
\langle v, w \rangle_A := \langle Av, w \rangle 
\qquad \forall\, v,w \in \C^n ,
\end{equation}
on $\C^n$. In the special case $A=\mathrm{diag}(1,\ldots,1)$, the product \eqref{eq:weighted_product} coincides with the standard Hermitian product. The weighted Hermitian product induces an exact magnetic system on the unit sphere $\mathbb{S}^{2n-1}\subset\C^n$ (with respect to the standard Hermitian product $\langle\cdot,\cdot\rangle$). By restricting $\Re\langle\cdot,\cdot\rangle_A$ to $T\mathbb{S}^{2n-1}$, we obtain a Riemannian metric $g_A$ on $\mathbb{S}^{2n-1}$. Moreover, restricting the one-form
\[
(\alpha_A)_z := \tfrac12\,\Im\langle z,\cdot\rangle_A
\]
to $T\mathbb{S}^{2n-1}$ defines a magnetic potential, which is in fact a so-called tight contact form on $\mathbb{S}^{2n-1}$. We refer to \cite{Gg08} for more background on contact geometry.

It turns out that the magnetic system $(\mathbb{S}^{2n-1},g_A,\d\alpha_A)$ is exact magnetomorphic to $(E(A),g,\d\alpha)$ in the sense of \Cref{defn: exact magn group}, that is also the primitive of the magnetic field is preserved, as stated in the following: 

\begin{lem}\label{lem:mag_Sphere_ellipsoid}
The exact magnetic systems $(E(A), g, \d\alpha)$ and $(\mathbb{S}^{2n-1}, g_A, \d\alpha_A)$ are exact magnetomorphic. In fact, the map
\[
F_A : \SS^{2n-1} \longrightarrow E(A), 
\qquad 
z \mapsto \sqrt{A}\cdot z
\]
is an exact magnetomorphism. 
\end{lem}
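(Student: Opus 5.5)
Since $A=\mathrm{diag}(a_1,\ldots,a_n)$ has positive real entries, $\sqrt{A}=\mathrm{diag}(\sqrt{a_1},\ldots,\sqrt{a_n})$ is a real, diagonal, invertible matrix. I will prove the lemma by a direct linear-algebra computation with $F_A$, in three steps.

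\textbf{Step 1: $F_A$ is a diffeomorphism $\SS^{2n-1}\to E(A)$.} For $z\in\SS^{2n-1}$ put $w=\sqrt{A}z$. Then
\[
\langle A^{-1}w,w\rangle=\langle A^{-1}\sqrt{A}z,\sqrt{A}z\rangle=\langle z,z\rangle=1,
\]
so $F_A(\SS^{2n-1})\subseteq E(A)$. The restriction to $E(A)$ of the linear map $w\mapsto \sqrt{A}^{-1}w$ maps $E(A)$ into $\SS^{2n-1}$ by the same computation read backwards, and it inverts $F_A$. Both maps are restrictions of linear isomorphisms of $\IC^n$, so they are smooth. Moreover $\d(F_A)_z=\sqrt{A}|_{T_z\SS^{2n-1}}$, which maps $T_z\SS^{2n-1}$ isomorphically onto $T_{F_A(z)}E(A)$.

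\textbf{Step 2: $F_A^*g=g_A$.} The key identity is that $\sqrt{A}$ is real diagonal, hence self-adjoint for the standard Hermitian product:
\[
\langle\sqrt{A}v,\sqrt{A}w\rangle=\langle Av,w\rangle=\langle v,w\rangle_A\qquad\text{for all } v,w\in\IC^n .
\]
This holds regardless of which slot the product is conjugate-linear in. For $v,w\in T_z\SS^{2n-1}$ it gives
\[
(F_A^*g)_z(v,w)=\Re\langle\sqrt{A}v,\sqrt{A}w\rangle=\Re\langle v,w\rangle_A=(g_A)_z(v,w).
\]

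\textbf{Step 3: $F_A^*\alpha=\alpha_A$.} By the same identity, for $v\in T_z\SS^{2n-1}$,
\[
(F_A^*\alpha)_z(v)=\tfrac12\Im\langle\sqrt{A}z,\sqrt{A}v\rangle=\tfrac12\Im\langle z,v\rangle_A=(\alpha_A)_z(v).
\]
Since pullback commutes with $\d$, this also gives $F_A^*\d\alpha=\d\alpha_A$. So $F_A$ preserves the metric and the magnetic primitive (hence the magnetic form), which is exactly the definition of an exact magnetomorphism. This proves both assertions of the lemma.

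I expect no real obstacle. The only points needing care are bookkeeping: the pullbacks must be evaluated on vectors tangent to the sphere, and the forms on $\IC^n$ must be pulled back before restricting to the submanifolds. Both are handled by $\d(F_A)_z=\sqrt{A}$ together with the self-adjointness of $\sqrt{A}$.
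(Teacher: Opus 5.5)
Your proposal is correct and follows essentially the same approach as the paper, which simply asserts that $F_A$ is a diffeomorphism and that a short computation gives $F_A^*g=g_A$ and $F_A^*\alpha=\alpha_A$. Your write-up supplies exactly that computation, using that $\sqrt{A}$ is real diagonal and hence self-adjoint.
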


\begin{proof}
One readily checks that $F_A$ is a diffeomorphism. A short computation shows that
\[
F_A^* g = g_A 
\qquad \text{and} \qquad 
F_A^* \alpha = \alpha_A .
\]
Hence, by \Cref{defn: exact magn group}, $F_A$ is an exact magnetomorphism. 
\end{proof}

Using this point of view allows us to prove, for instance, that the magnetic system $(E(A), g,\d\alpha)$ is Killing and to compute its Mañé critical value, which is the content of the next lemma.

\begin{lem}\label{lemm:mag_sys_Ellipsoid_are_Killing}
The magnetic system $(\mathbb{S}^{2n-1}, g_A, \d\alpha_A)$ is Killing; consequently, so is $(E(A),g,\d\alpha)$. In particular, the Mañé critical value is given by
\[
c(E(A),g,\d\alpha)
=\frac{1}{2}\|\alpha\|_\infty^2=\frac{a_n}{8}\, ,
\]
where $\|\alpha\|_\infty$ denotes the dual norm of $\alpha$ with respect to $g$.
\end{lem}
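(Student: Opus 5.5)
The plan is to verify the Killing property on the sphere, where the dual vector field turns out to be the Hopf field, and then to transport everything to $E(A)$ with the exact magnetomorphism $F_A$ of \Cref{lem:mag_Sphere_ellipsoid}.

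\textbf{Step 1: the dual vector field.} For $z\in\SS^{2n-1}$ and $v\in T_z\SS^{2n-1}$, I will rewrite the primitive using the identity $\Im\langle w,\cdot\rangle=\Re\langle \i w,\cdot\rangle$ from the paper's conventions, together with the fact that $A$ is real diagonal and hence commutes with $\i$:
\[
(\alpha_A)_z(v)=\tfrac12\Im\langle Az,v\rangle=\tfrac12\Re\langle A(\i z),v\rangle=g_A\bigl(\tfrac{\i}{2}z,v\bigr).
\]
Since $\Re\langle z,\i z\rangle=0$, the vector $\tfrac{\i}{2}z$ is tangent to the sphere. So the $g_A$-dual of $\alpha_A$ is the Hopf field $X_A(z)=\tfrac{\i}{2}z$, with flow $z\mapsto e^{\i t/2}z$.

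\textbf{Step 2: the Killing property.} The flow $z\mapsto e^{\i t/2}z$ is unitary and commutes with $A$. It therefore preserves both $\SS^{2n-1}$ and $\Re\langle A\cdot,\cdot\rangle$, so it acts by isometries of $g_A$ and $X_A$ is Killing. Now $F_A$ is an isometry with $F_A^*\alpha=\alpha_A$, so it carries $X_A$ to the $g$-dual $X$ of $\alpha$. Hence $(E(A),g,\d\alpha)$ is Killing as well.

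\textbf{Step 3: the Mañé critical value.} The magnetic Lagrangians satisfy $L_{E(A)}\circ \d F_A=L_{\SS}$, so actions of closed curves correspond and the two critical values coincide. I will therefore compute on the sphere.

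First the sup norm. The dual norm of $\alpha_A$ equals $|X_A|_{g_A}$, and
\[
|X_A|_{g_A}^2=\tfrac14\langle Az,z\rangle\le \tfrac{a_n}{4},
\]
with equality at $z=e_n$. Hence $\tfrac12\|\alpha\|_\infty^2=\tfrac{a_n}{8}$.

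Next the upper bound. By Cauchy--Schwarz, for every $(z,v)$,
\[
L+\kappa\ \ge\ \tfrac12|v|^2-|X_A||v|+\kappa\ \ge\ \kappa-\tfrac12|X_A|^2\ \ge\ \kappa-\tfrac{a_n}{8}.
\]
So for $\kappa\ge a_n/8$ every closed curve has nonnegative $S_{L+\kappa}$, which gives $c\le a_n/8$.

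Finally the lower bound. Take the closed Hopf orbit $\gamma(t)=e^{\i t/2}e_n$ with $t\in[0,4\pi]$. Its velocity is $X_A$, and $|X_A|^2=a_n/4$ along the whole orbit. Its action is
\[
S_{L+\kappa}(\gamma)=\int_0^{4\pi}\bigl(\tfrac12|X_A|^2-|X_A|^2+\kappa\bigr)\,\d t=4\pi\bigl(\kappa-\tfrac{a_n}{8}\bigr),
\]
which is negative for $\kappa<a_n/8$. Therefore $c=a_n/8$.

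The main point requiring care is Step 1. One has to get the Hermitian-product convention right so that the dual field comes out exactly as the Hopf field $\tfrac{\i}{2}z$, and confirm that it needs no tangential projection. After that, the Killing property and the critical-value estimates are straightforward.
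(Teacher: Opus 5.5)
Your proof is correct. Steps 1 and 2 follow the paper, except that you verify the Hermitian convention behind the dual field $\tfrac{\i}{2}z$ (the paper just states it), and you transport the Killing property by pushing $X_A$ forward with $F_A$ rather than quoting \Cref{lem:properties_exact_magnetomorphisms}.

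Step 3 takes a genuinely different route. The paper obtains $c=\tfrac12\|\alpha_A\|_\infty^2$ from \cite[Thm.~2.4]{ABM}, which gives this formula for every Killing magnetic system, and then only computes the sup norm. You instead prove both inequalities by hand. The upper bound comes from the pointwise estimate $L+\kappa\ge\kappa-\tfrac12|X_A|^2$, which holds for any exact system. The lower bound comes from the closed Hopf loop $e^{\frac12\i t}e_n$, $t\in[0,4\pi]$, whose action is $4\pi(\kappa-\tfrac{a_n}{8})$.

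Your argument is self-contained, and it shows that the computation of $c$ never uses the Killing property. All it needs is that the maximum of $|X_A|$ is attained along a closed orbit of $X_A$. It also singles out the same orbit that the paper later uses in the contact-type corollary.

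The paper's route is shorter and explains why the formula appears as an ``in particular''. The cited theorem covers all Killing systems, including those in which no orbit of $X$ through a maximum of $|X|$ is closed, where an explicit loop like yours is not available.
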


\begin{rem}
As $\pi_1(E(A))=0$, the lowest, strict, and Mañé critical values of $(E(A),g,\d\alpha)$ coincide; see~\cite{FathiMaderna2007}.
\end{rem}

\begin{proof}
The $g_A$-metric dual of $\alpha_A$ is the vector field $X_z= \tfrac{1}{2}\i z$ for all $z\in \S$. Its flow is given by
\begin{equation}\label{eq:flow_of_X}
    \varPhi_X^t(z)=e^{\frac{1}{2}\i t}z ,
\end{equation}
which is an isometry for the metric $g_A$. Hence the magnetic system $(\S,g_A,\d\alpha_A)$ is Killing. By \Cref{lem:properties_exact_magnetomorphisms}, the Killing property is preserved under exact magnetomorphisms. Since \Cref{lem:mag_Sphere_ellipsoid} shows that $(\S,g_A,\d\alpha_A)$ and $(E(A),g,\d\alpha)$ are exact magnetomorphic, the magnetic system $(E(A),g,\d\alpha)$ is Killing as well.

Since $(\S,g_A,\d\alpha_A)$ is Killing, we can conclude from \cite[Thm. 2.4]{ABM} that
\[
c(\S,g_A,\d\alpha_A)= \frac{1}{2}\|\alpha_A\|_\infty^2\, .
\]
Using the fact that the $g_A$-metric dual of $\alpha_A$ is $X_z=\tfrac12 \i z$, we compute
\[
\|\alpha_A\|_\infty^2=\frac{1}{4}  \sup_{z\in \S}(g_A)_z(\i z, \i z)= \frac{1}{4}a_n\, ,
\]
which finishes the proof, since by \Cref{lem:properties_exact_magnetomorphisms} the Mañé critical value is preserved under exact magnetomorphisms.
\end{proof}

\subsection{Integrals of motion of electromagnetic Lagrangians on $E(A)$}\label{ss:Integrals of motion of electromagentic Lagrangians}
We use the spherical interpretation of the magnetic system $(E(A), g, \d\alpha)$ again to gain further insight. For this purpose we introduce coordinates on $\S$ given by
\[
z_1=\theta_1 e^{\i \varphi_1},\; \ldots ,\; z_n=\theta_n e^{\i \varphi_n},
\]
where $\varphi_j\in \IR/2\pi \ZZ$ for each $j$ and $\sum_{j=1}^n \theta_j^2=1$. \\

Consider now a potential $V$ on $\S$ which does not depend on $\varphi_1,\ldots,\varphi_n$. This gives rise to the following electromagnetic Lagrangian
\begin{equation}\label{eq:def_int_elec_mag_langr_on_S}
L:T\S \longrightarrow \RR, 
\qquad 
(q,v)\mapsto \frac12 (g_A)_q(v,v)-(\alpha_A)_q(v)-V(q)
\end{equation}
whose Euler--Lagrange flow agrees with the Hamiltonian flow with respect to the energy
\[
E_V = E+ V ,
\]
where $E$ denotes the kinetic Hamiltonian, and the twisted symplectic form $\omega_{\d \alpha}$.

We can now state the following.

\begin{prop}\label{Prop:intergrals of motion of electromagnetic ELE on ellipsoid}
The Euler--Lagrange flow of $L$ defined in \eqref{eq:def_int_elec_mag_langr_on_S} possesses $(n+1)$ independent Poisson-commuting integrals of motion. In the case $n=2$ the system is completely integrable. 

In particular, for $V=0$ the same conclusions hold for the magnetic system $(E(A), g, \d\alpha)$.
\end{prop}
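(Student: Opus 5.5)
The idea is to use the torus symmetry of the sphere model. The coordinates on $\S$ are $z_j=\theta_j e^{\i\varphi_j}$, and we let $\mathbb{T}^n=(\IR/2\pi\ZZ)^n$ act by $z\mapsto (e^{\i t_1}z_1,\ldots,e^{\i t_n}z_n)$. This action preserves $g_A$, because $A$ is diagonal with real entries. It preserves $\alpha_A$, because $\Im\langle e^{\i t}z, e^{\i t}v\rangle_A=\Im\langle z,v\rangle_A$ holds componentwise. It preserves $V$ by assumption. Hence it acts by exact magnetomorphisms of $(\S,g_A,\d\alpha_A)$ and leaves $L$ invariant.

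Noether's theorem for Lagrangian systems then gives, for each generator $\partial_{\varphi_j}$ (the vector field $v_j(z)=\i z_j e_j$), the conserved quantity
\[
I_j(q,v)=\frac{\partial L}{\partial v}(q,v)\,[v_j(q)]=(g_A)_q(v,\i q_je_j)-(\alpha_A)_q(\i q_je_j),\qquad j=1,\ldots,n.
\]
Explicitly, $I_j=a_j\Re\langle v_j,\i q_j\rangle-\tfrac12 a_j|q_j|^2$, i.e.\ $a_j(\theta_j^2\dot\varphi_j-\tfrac12\theta_j^2)$. Together with $E_V$ this gives $n+1$ candidate integrals.

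We then check Poisson commutativity with respect to $\omega_{\d\alpha}$ after passing to $T^*\S$ via the Legendre transform. Since $L$ is $\mathbb{T}^n$-invariant and $\alpha_A$ is invariant, the lifted action on $T\S$ is Hamiltonian for $\omega_{\d\alpha_A}$ with moment map components $I_j$. Each $I_j$ is conserved, so $\{I_j,E_V\}=0$. For the brackets among the $I_j$, equivariance of the moment map for an abelian group gives $\{I_j,I_k\}=c_{jk}$ constant. Evaluating on the zero section, or noting directly that $\{I_j,I_k\}=\omega(v_j^{\text{lift}},v_k^{\text{lift}})=\pm\, \d\alpha_A(v_j,v_k)+\ldots$, we verify that it vanishes. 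For instance, $\d\alpha_A(\i z_je_j,\i z_ke_k)=\Im\langle \i z_je_j,\i z_ke_k\rangle_A=0$ for $j\neq k$, since distinct coordinate directions are $\langle\cdot,\cdot\rangle_A$-orthogonal.

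Next comes functional independence on an open dense set. The differentials $\d I_j$ involve $\d\dot\varphi_j$ independently, because the coefficients $a_j\theta_j^2$ are nonzero where all $\theta_j\neq 0$. $E_V$ contains the radial velocity terms $\dot\theta_j$, which the $I_j$ do not depend on. So at points with some $\dot\theta\neq 0$ and all $\theta_j\neq 0$ the differentials are linearly independent. This last point is the one I expect to require the most care. The constraint $\sum\theta_j^2=1$ makes $\sum_j I_j/a_j$ look suspiciously like a combination related to the energy, so I would check rank via an explicit point rather than symbolically.

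Finally, for $n=2$, $\dim T\mathbb{S}^3=6$, so $3=n+1$ commuting independent integrals give Liouville integrability. For $V=0$ we transport everything to $(E(A),g,\d\alpha)$ via the exact magnetomorphism $F_A$ of \Cref{lem:mag_Sphere_ellipsoid}. Its tangent map conjugates the magnetic flows and pulls back $\omega_{\d\alpha}$ and $E$. Integrals, Poisson commutativity and independence are therefore preserved.
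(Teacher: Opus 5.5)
Your proposal is correct and follows essentially the paper's route: the $\varphi_j$ are cyclic, so the conjugate momenta $c_j=\partial_{\dot\varphi_j}L$ (your $I_j=a_j\theta_j^2(\dot\varphi_j-\tfrac12)$, which the paper later identifies with the torus moment-map components $F_{X_j}$) together with $E_V$ give the $n+1$ Poisson-commuting integrals, and $F_A$ transports them to $E(A)$ when $V=0$. You give more detail than the paper on Poisson commutativity and independence, and the constraint you were worried about causes no problem: on $\{\sum_j\theta_j\dot\theta_j=0\}$ the $\dot\theta$-derivative of $E_V$ vanishes only if $A\dot\theta\parallel\theta$, and together with $\dot\theta\perp\theta$ this forces $\dot\theta=0$.
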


\begin{rem}
For $n=2$ and $A=\mathrm{diag}(1,1)$ we recover \cite[Cor.~A.1]{BM24}.
\end{rem}

\begin{rem}
This result is generalized below in Theorem \ref{thm:lie_algebra_mag_group_encodes_integrals_of_motion}. See also the examples in Subsection \ref{subsection:illustrations_intregrals_of_motion}.
\end{rem}

\begin{proof}
Observe that the Lagrangian $L$ in \eqref{eq:def_int_elec_mag_langr_on_S} does not depend on the variables $\varphi_1,\ldots,\varphi_n$. Hence these variables are cyclic, and the corresponding conjugate momenta
\[
c_j := \partial_{\dot{\varphi}_j} L, \qquad j=1,\ldots,n,
\]
are conserved quantities, which moreover Poisson-commute pairwise.

The energy $E_V = E+ V$ is also conserved along the Euler--Lagrange flow. This yields $n+1$ independent integrals of motion. In particular, when $n=2$ the system is completely integrable.

Using \Cref{lem:mag_Sphere_ellipsoid}, for $V=0$ we obtain the statements for $(E(A),g,\d\a)$.
\end{proof}

\subsection{The contact type property of the magnetic system $E(A)$}\label{ss:The contact type property of the magnetic system}
We close this subsection with an observation of independent interest concerning the magnetic system $(E(A),g,\d\alpha)$, which has been a guiding example in the authors' work \cite{DMS25Contact,QuadraticgrowthLinaTom}. 

We briefly recall that the question of whether a low-energy surface $\Sigma_\kappa=E^{-1}(\kappa)$ of the magnetic geodesic flow is of contact type remains open for general closed manifolds. It was explicitly posed in 2010 by Macarini--Paternain in~\cite[p.2]{MP10}. We refer to \cite{DMS25Contact} and the references therein for a detailed overview and precise definitions.

\begin{cor}
The energy surfaces $\Sigma_\kappa$ associated with the magnetic system $(E(A), g,\d\alpha)$ are not of contact type for all $\kappa\in (0, \frac{a_n}{8}]$.
\end{cor}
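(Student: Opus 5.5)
We want to show that $\Sigma_\kappa$ is not of contact type for $\kappa\in(0,\tfrac{a_n}{8}]$. By \Cref{lem:mag_Sphere_ellipsoid} and \Cref{lemm:mag_sys_Ellipsoid_are_Killing}, it suffices to work on $(\S,g_A,\d\alpha_A)$, a Killing system with Mañé critical value $c=\tfrac{a_n}{8}$. Being of contact type is invariant under the symplectomorphism of twisted tangent bundles induced by an exact magnetomorphism.

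The standard obstruction is the following. If $\Sigma_\kappa$ is of contact type with contact form $\beta$ satisfying $\d\beta=\omega_\sigma|_{\Sigma_\kappa}$, then every closed characteristic has nonzero action $\int\beta$ of a fixed sign. Exactness of the magnetic system lets us compare $\beta$ with $\lambda-\pi^*\alpha_A$ restricted to $\Sigma_\kappa$. Since $H^1(\Sigma_\kappa)=0$ for $n\ge2$ (the unit sphere bundle of $\S$), the difference is exact. Hence for a closed magnetic geodesic $\gamma$ of energy $\kappa$ and period $T$,
\[
\int_\gamma\beta=\int_0^T\big(|\dot\gamma|^2-\alpha_A(\dot\gamma)\big)\,\d t=S_{L+\kappa}(\gamma).
\]
So the plan is to exhibit, for each such $\kappa$, two closed magnetic geodesics on $\Sigma_\kappa$ whose free-time actions have opposite signs (or one with zero action). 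Since the characteristics on $\Sigma_\kappa$ are exactly the lifts of these geodesics, this contradicts contact type.

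To produce the orbits I would use \Cref{prop: general interpolation}. On the circle $\{z_n\text{-axis}\}\cap\S$ we have $|X|^2=\tfrac14 a_n$, the maximal value. However, $|X|$ is not constant on $\S$ when $A$ is not a multiple of the identity, so hypothesis \eqref{it:prop hyp} fails globally.

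I would therefore argue directly via the invariant totally magnetic circles $C_j=\{z_j=e^{\i\varphi}\sqrt{\cdot}\}$, which are fixed-point sets of the magnetomorphisms $z_k\mapsto -z_k$ ($k\neq j$), together with \Cref{prop: totally magnetic submfds - fixed point set}. On $C_n$ the magnetic geodesics are circles traversed in both orientations with constant speed $\sqrt{2\kappa}$. Their action is
\[
\pm\text{ type: }\quad \sqrt{2\kappa}\,L_n\Big(\sqrt{2\kappa}\mp\tfrac{\sqrt{a_n}}{2}\Big),
\]
where $L_n$ is the length. The orbit running against $X$ always has positive action. The orbit running along $X$ has action $\le 0$ exactly when $\sqrt{2\kappa}\le\tfrac{\sqrt{a_n}}2$, that is, when $\kappa\le\tfrac{a_n}{8}$. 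This gives the required sign change on the whole interval, including the endpoint, where the action vanishes.

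The main obstacle is the first step: justifying rigorously that the contact form's action on closed characteristics equals the free-time action up to a fixed sign. This needs $H^1(\Sigma_\kappa;\IR)=0$ (so $n\ge2$) and care with the orientation and sign conventions for characteristics. Checking that the circles are genuinely magnetic geodesics, via \Cref{t:totally_magnetic_submanifolds_characterization_ABM}, is routine.
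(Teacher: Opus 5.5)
Your proof is correct. Its skeleton matches the paper's: reduce to $(\mathbb{S}^{2n-1},g_A,\d\alpha_A)$, take the circle through $e_n$ where $|X|$ is maximal, and compare its free-time action with the action of a putative contact form. The paper phrases that comparison as the orbit being null-homologous; you phrase it as $H^1(\Sigma_\kappa)=0$.

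\textbf{Obstruction step.} The routes differ here. The paper uses only the orbit $\gamma_r$ running along $X$. It then invokes the cited criterion that a null-homologous orbit of non-positive action rules out contact type. That criterion implicitly contains an extra fact: any contact form $\beta$ with $\d\beta=\omega_{\d\alpha}|_{\Sigma_\kappa}$ is automatically positive on the flow, which needs e.g. a Stokes argument on the sublevel set. You also use the same circle traversed against $X$, whose action is positive. Hence you only need the elementary fact that $\beta$ has a fixed nonzero sign on the characteristic line field of the connected surface $\Sigma_\kappa$. At $\kappa=a_n/8$ the vanishing action already gives the contradiction. This is more self-contained, and it does not depend on the sign convention for $\alpha$.

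\textbf{Orbit construction.} The paper cites the discussion in \cite{ABM} that orbits of the Killing field $X$ through critical points of $|X|$ are magnetic geodesics. You instead realize the circle $\{e^{\i\varphi}e_n\}$ as the fixed-point set of the maps $z_k\mapsto -z_k$, $k\neq n$, and apply \Cref{prop: totally magnetic submfds - fixed point set}. Since the circle is one-dimensional, skew-symmetry forces $Y$ to vanish on its tangent line. So every constant-speed parametrization, in either direction, is a magnetic geodesic. This is valid and stays within the paper's own results.

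\textbf{Minor slips.}
\begin{enumerate}
\item The action of the circle at speed $\sqrt{2\kappa}$ is $L_n\bigl(\sqrt{2\kappa}\mp\tfrac{\sqrt{a_n}}{2}\bigr)$ with $L_n=2\pi\sqrt{a_n}$. Your formula has a spurious factor $\sqrt{2\kappa}$, which does not affect the signs.
\item On the sphere model the circle is simply $\{e^{\i\varphi}e_n\}$; your description of $C_j$ is garbled.
\item The step you flag as the main obstacle is already settled by your own argument. The forms $\beta$ and $\lambda-\pi^*\alpha_A$ differ by an exact form, so the two actions agree exactly, not merely up to sign.
\end{enumerate}
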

\begin{proof}
Since the contact-type property is preserved under exact magnetomorphisms, it suffices that we prove the statement for the exact magnetomorphic system $(\S,g_A,\d\alpha_A)$. By \Cref{lem:mag_Sphere_ellipsoid}, this system is exact magnetomorphic to $(E(A),g,\d\alpha)$.

Denote by $e_1,\dots, e_n$ the standard basis of $\C^n$. For $z\in\{e_1,\dots, e_n\}$, the orbits of the flow in \eqref{eq:flow_of_X} are geodesics of $(\S,g_A)$ and, by the discussion in front of \cite[Lemma 2.2]{ABM}, magnetic geodesics of $(\S,g_A,\d\alpha_A)$. As the $g_A$-norm of $\alpha_A$ is maximal along $\gamma(t)=e^{\frac12 \i t} e_n$, a computation following the lines of \cite[§3.3]{DMS25Contact} shows that the constant-speed reparametrization $\gamma_r(t):=\gamma(r t)$ is, for all $r \leq 1$, a periodic magnetic geodesic of $(\S,g_A,\d\alpha_A)$ with energy $\kappa=r^2 \frac{a_n}{8}$ and non-positive action.

The existence of a null-homologous periodic orbit of energy $\kappa$ with non-positive action implies that the energy surface $\Sigma_\kappa$ is not of contact type. We refer the reader to \cite[App.~A]{DMS25Contact} and the original references \cite{ConMacPat2004,McDuff1987,Sullivan1976} for this statement. The corollary follows.
\end{proof}

\section{Further aspects of magnetic geometry: integrability and magnetic symmetries}
\label{section: Magnetic Geometry of Hypersurfaces}

In this section we investigate the relation between magnetic symmetries and integrability. More precisely, in \Cref{subsection: magnetic geometry- integrals of motion} we show that Hamiltonian group actions associated with the magnetomorphism group produce Poisson-commuting integrals of motion for the magnetic geodesic flow. In \Cref{subsection:The exact magnetomorphism group} we introduce the exact magnetomorphism group to which we apply the previous mechanism. This will be illustrated in \Cref{subsection:illustrations_intregrals_of_motion} on complex ellipsoids, homogeneous spaces and spheres of revolution.

\subsection{Integrals of motion from magnetic symmetries}
\label{subsection: magnetic geometry- integrals of motion}

Let $(M,g,\d\alpha)$ be an exact magnetic system on a closed manifold. We retain the notation from Section~\ref{section: Preliminaries}; in particular, $TM$ is equipped with the twisted symplectic form $\omega_{\d\alpha}$.

If $G\subseteq \Isom(M,g)$ is a closed Lie subgroup, we consider its natural lifted action on $TM$ and identify $\mathfrak g=\Lie(G)$ with the corresponding infinitesimal generators on $M$.

Subsequently, it will be useful to be aware of the following expression for the infinitesimal generator of the lifted action.

\begin{lem}
\label{lem: infinitesimal generator}
For $X\in\mathfrak g$, the infinitesimal generator $X^\#\in\mathfrak X(TM)$ of the lifted action is given by
\[
X^\#(q,v)=(X(q),\nabla_vX)
\]
with respect to the Levi--Civita splitting $T_{(q,v)}TM\cong T_qM\oplus T_qM$.
\end{lem}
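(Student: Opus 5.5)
To prove the formula, I compute the tangent vector to the curve $s\mapsto \d\phi_s(q,v)$ in $TM$, where $\phi_s=\exp(sX)$ is the flow of the infinitesimal generator $X$ on $M$. The lifted action of $\phi_s\in G$ on $TM$ is $(q,v)\mapsto(\phi_s(q),\d\phi_s(q)v)$, so by definition
\[
X^\#(q,v)=\frac{\d}{\d s}\Big|_{s=0}\bigl(\phi_s(q),\d\phi_s(q)v\bigr).
\]
With respect to the Levi--Civita splitting, the horizontal component of a curve $s\mapsto(c(s),V(s))$ in $TM$ is $\dot c(0)$ and the vertical component is $\frac{D}{\d s}V(0)$, the covariant derivative of $V$ along $c$. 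Here $c(s)=\phi_s(q)$, so the horizontal part is $X(q)$ immediately.

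For the vertical part, set $V(s)=\d\phi_s(q)v$. I realise $V$ as a variation field. Choose a curve $\beta(\tau)$ in $M$ with $\beta(0)=q$ and $\beta'(0)=v$, and put $H(s,\tau)=\phi_s(\beta(\tau))$. Then $\partial_\tau H(s,0)=V(s)$ and $\partial_s H(0,\tau)=X(\beta(\tau))$. Since $\nabla$ is torsion-free, the symmetry lemma gives
\[
\frac{D}{\partial s}\partial_\tau H=\frac{D}{\partial \tau}\partial_s H .
\]
Evaluating at $(0,0)$, the left-hand side is $\frac{D}{\d s}V(0)$, and the right-hand side is $\frac{D}{\d\tau}\big|_{0}X(\beta(\tau))=\nabla_vX$. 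This gives the vertical component $\nabla_vX$, as claimed.

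The only delicate point is the identification of the vertical component in the Levi--Civita splitting with the covariant derivative along the base curve. This is the standard definition via the connection map $K\colon TTM\to TM$, so I state it explicitly as the convention used. Note also that the argument uses only that $\phi_s$ is the flow of $X$, not that it is an isometry. The isometry property becomes relevant only later, when one uses the skew-symmetry of $\nabla X$.
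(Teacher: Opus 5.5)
Your argument is correct. The horizontal part is immediate, and applying the symmetry lemma $\frac{D}{\partial s}\partial_\tau H=\frac{D}{\partial\tau}\partial_s H$ for the torsion-free Levi--Civita connection to $H(s,\tau)=\phi_s(\beta(\tau))$ gives the vertical part $\nabla_vX$. The paper does no computation and simply cites this as the standard formula for the tangent lift in the connection-induced splitting, so your proof supplies exactly the derivation behind that citation. You are also right that it uses only the flow property of $\phi_s$, not the isometry property.
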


\begin{proof}
This is the standard expression for the tangent lift with respect to the connection-induced splitting of $TTM$.
\end{proof}

We now state the general mechanism producing integrals of motion from magnetic symmetries.

\begin{thm}[Integrals of motion from magnetic symmetries]
\label{thm:lie_algebra_mag_group_encodes_integrals_of_motion}
Let $(M,g,\d\alpha)$ be an exact magnetic system on a closed manifold, and let
\[
G\subseteq \mathrm{Mag}(M,g,\d\alpha)
\]
be a closed Lie subgroup whose induced action on $(TM,\omega_{\d\alpha})$ is Hamiltonian, with moment map $\mu:TM\to\mathfrak g^*$.

If $T\subseteq G$ is a torus with Lie algebra $\mathfrak t$, then for every $X\in\mathfrak t$ the function
\[
F_X:TM\longrightarrow \RR, \quad (q,v)\mapsto\langle \mu(q,v),X\rangle
\]
is an integral of motion of the magnetic geodesic flow. Moreover, for all $X,Y\in\mathfrak t$ one has
\[
\{F_X,F_Y\}=0.
\]

In particular, any basis of $\mathfrak t$ yields a family of Poisson-commuting first integrals, which are independent on an open dense subset of $TM$ whenever the corresponding infinitesimal generators are. If $\dim T<\dim M$, the kinetic energy can be included as an additional independent integral.
\end{thm}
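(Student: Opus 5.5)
The plan is to use Noether's theorem in its Hamiltonian form: a moment map component of a symmetry that preserves the Hamiltonian is a first integral. I then get commutativity from the equivariance properties of $\mu$ restricted to the abelian subalgebra $\mathfrak t$.

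\textbf{Step 1: invariance of the data.} Every $\psi\in G$ satisfies $\psi^*g=g$ and $\psi^*\d\alpha=\d\alpha$. Hence the tangent lift $d\psi$ preserves the kinetic energy $E(q,v)=\tfrac12|v|^2$. It also preserves $\omega_{\d\alpha}=\d\lambda-\pi^*\d\alpha$, because the canonical form $\d\lambda$ (pulled back via $g$) is preserved by lifts of isometries, and $\pi\circ d\psi=\psi\circ\pi$ handles the second term. Differentiating along $\exp(tX)$ gives $X^\#E=0$; in the description of \Cref{lem: infinitesimal generator} this is $g(v,\nabla_vX)=0$, which is just the Killing property.

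\textbf{Step 2: conservation.} By definition of the moment map, $\iota_{X^\#}\omega_{\d\alpha}=\d F_X$. Writing $X_E$ for the Hamiltonian vector field of $E$ with respect to $\omega_{\d\alpha}$ (the magnetic geodesic flow), we get
\[
\{F_X,E\}=\omega_{\d\alpha}(X^\#,X_E)=\d F_X(X_E)=-\d E(X^\#)=0,
\]
up to the sign convention for the bracket. So $F_X$ is constant along the flow. Note that this step needs no torus: it works for every $X\in\mathfrak g$.

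\textbf{Step 3: involutivity.} For $X,Y\in\mathfrak t$, compute $\{F_X,F_Y\}=\omega_{\d\alpha}(X^\#,Y^\#)=\d F_X(Y^\#)$. This is the derivative of $F_X$ along the flow of $Y^\#$, which is the lifted action of $\exp(tY)\in T$. This is the step I expect to be the main obstacle, because the hypothesis only asks for the action to be Hamiltonian, not equivariant. I would handle it in one of two ways.
\begin{itemize}
\item \emph{Equivariant case.} If $\mu$ is equivariant, then $\mu\circ\psi=\mathrm{Ad}^*_{\psi}\circ\mu$. Since $T$ is abelian, $\mathrm{Ad}^*_{\exp tY}$ fixes $X$, so $F_X$ is $T$-invariant and the bracket vanishes.
\item \emph{General case.} In general, $c(X,Y):=\{F_X,F_Y\}-F_{[X,Y]}=\{F_X,F_Y\}$ is locally constant, since it is a Lie algebra $2$-cocycle; I would verify this by using $[X^\#,Y^\#]=0$ and $\d$ of $\omega(X^\#,Y^\#)$. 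To show the constant is zero, I would use that $\{F_X,F_Y\}=\d F_X(Y^\#)$ is the derivative of $F_X$ along a compact torus orbit. A function on $TM$ whose derivative along the periodic flow of $Y^\#$ is a constant $c$ must have $c=0$, because integrating over a closed orbit gives $c\cdot(\text{period})=0$. If some orbit of $Y^\#$ is not closed, I would apply the same argument to the closure of the one-parameter subgroup, which is a subtorus, by averaging over it.
\end{itemize}

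\textbf{Step 4: independence statements.} If the generators $X_1^\#,\dots,X_k^\#$ from a basis of $\mathfrak t$ are linearly independent on an open dense set, then by nondegeneracy of $\omega_{\d\alpha}$ so are the differentials $\d F_{X_i}=\iota_{X_i^\#}\omega_{\d\alpha}$. For the energy, I need $\d E\notin\mathrm{span}\{\d F_{X_i}\}$ generically. This is equivalent to $X_E\notin\mathrm{span}\{X_i^\#\}$. At a point $(q,v)$ the projection of $X_E$ is $v$, while the projections of the $X_i^\#$ span a space of dimension at most $\dim T<\dim M$, so choosing $v$ outside that span works on an open dense set. Finally, $E$ Poisson-commutes with all $F_X$ by Step 2.
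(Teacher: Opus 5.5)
Your proof is correct and follows the paper's argument. It uses Noether's theorem for $\{F_X,E\}=0$, equivariance of $\mu$ restricted to the abelian $\mathfrak t$ for involutivity, nondegeneracy of $\omega_{\d\alpha}$ for independence of the $\d F_{X_i}$, and the projection $\d\pi_{TM}(X_E)=v$ to add the energy. Your general-case argument in Step 3 goes beyond the paper, which simply takes equivariance as part of ``Hamiltonian'': since $[X^\#,Y^\#]=0$, the bracket $\omega_{\d\alpha}(X^\#,Y^\#)$ is locally constant, and it must vanish along compact torus orbits, so for a torus every moment map is automatically invariant.
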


\begin{rem}
\Cref{thm:lie_algebra_mag_group_encodes_integrals_of_motion} applies whenever the induced action on $(TM,\omega_{\d\alpha})$ is Hamiltonian. In the following subsection we show that this is the case for the exact magnetomorphism group.
\end{rem}

\begin{rem}
The Hamiltonianity assumption is natural. Indeed, in the classical case of the isometry group acting on the tangent bundle of a Riemannian manifold equipped with its canonical symplectic structure, the induced action is always Hamiltonian, and the associated moment map recovers the usual Noether integrals of motion. Thus, \Cref{thm:lie_algebra_mag_group_encodes_integrals_of_motion} should be viewed as the magnetic analogue of this standard picture.
\end{rem}
\begin{proof}
Since $G$ acts by isometries, the induced action on $TM$ preserves the kinetic energy
\[
E(q,v)=\tfrac12 g_q(v,v),
\]
whose Hamiltonian flow is the magnetic geodesic flow. Hence, by Noether's theorem, each $F_X$ Poisson-commutes with $E$.

Equivariance of the moment map yields
\[
\{F_X,F_Y\}=\langle \mu,[X,Y]\rangle \qquad \forall X,Y\in\mathfrak{t},
\]
which vanishes since $\mathfrak t$ is abelian. The independence statements follow from the fact that the Hamiltonian vector field of $F_X$ is the infinitesimal generator $X^\#$.

If $\dim(T) < \dim(M) \,$, then the Hamiltonian vector field $X_E$ of the energy is linearly independent from the infinitesimal generators of a basis of $\mathfrak{t}$ on a dense subset of $TM$. Indeed, the pushforward of $X_E(q,v)$ via $\pi_{TM}$ is $v$, while the pushforward of the infinitesimal generators only depends on $q$, see Lemma \ref{lem: infinitesimal generator}.
\end{proof}

\subsection{The exact magnetomorphism group}\label{subsection:The exact magnetomorphism group}

\Cref{thm:lie_algebra_mag_group_encodes_integrals_of_motion} reduces the construction of first integrals to the existence of a Hamiltonian action on $(TM,\omega_{\d\alpha})$. We now show that, for exact magnetic systems, the exact magnetomorphism group provides such an action canonically, which we introduce for this purpose.

\begin{defn}[Exact magnetomorphism group]
\label{defn: exact magn group}Given two exact magnetic systems $(M_1,g_1,\d\alpha_1)$ and $(M_2,g_2,\d\alpha_2)$, the two systems are called \emph{exact magnetomorphic} if there exists a diffeomorphism $\varPhi: M_1 \to M_2$ such that
\[
\varPhi^*g_2=g_1 \quad \text{and} \quad \varPhi^*\alpha_2=\alpha_1\,.
\]Such a map is called a \emph{exact magnetomorphism}.\\

\noindent For an exact magnetic system $(M,g,\d\alpha)$, the \emph{exact magnetomorphism group} is defined by
\[
\mathrm{Mag}_{\mathrm{ex}}(M,g,\alpha)
:=
\{\psi\in\Diff(M)\mid \psi^*g=g \text{ and } \psi^*\alpha=\alpha\}.
\]
\end{defn}
\medskip
First, we observe that the subgroup
\[
\mathrm{Mag}_{\mathrm{ex}}(M,g,\alpha)\subseteq \mathrm{Mag}(M,g,\d\alpha)
\]
is closed. We next identify its Lie algebra.

\begin{lem}
\label{lem: Lie algebra Mag_ex}
The Lie algebra of the group of exact magnetomorphisms is given by
\[
\Lie(\mathrm{Mag}_{\mathrm{ex}}(M,g,\alpha))
=
\{X\in\mathfrak X(M)\mid X \text{ is Killing and } \mathcal L_X\alpha=0\}.
\]
\end{lem}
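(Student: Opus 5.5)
The plan is to identify the Lie algebra of $\mathrm{Mag}_{\mathrm{ex}}(M,g,\alpha)$ as a Lie subalgebra of $\mathfrak X(M)$ through its one-parameter subgroups. Since $M$ is closed, $\Isom(M,g)$ is a compact Lie group (Myers--Steenrod), and its Lie algebra is the algebra of Killing fields. We noted above that $\mathrm{Mag}_{\mathrm{ex}}(M,g,\alpha)$ is a closed subgroup of $\mathrm{Mag}(M,g,\d\alpha)$. It is therefore also a closed subgroup of $\Isom(M,g)$, and hence an embedded Lie subgroup by Cartan's closed subgroup theorem. Its Lie algebra is then
\[
\{X\in\Lie(\Isom(M,g))\mid \exp(tX)\in\mathrm{Mag}_{\mathrm{ex}}(M,g,\alpha)\ \forall t\in\RR\},
\]
where $\exp(tX)=\varPhi_X^t$ is the flow of the Killing field $X$. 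This reduces the lemma to the claim that, for a Killing field $X$, one has $(\varPhi_X^t)^*\alpha=\alpha$ for all $t$ if and only if $\mathcal L_X\alpha=0$.

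For the inclusion ``$\subseteq$'': if $(\varPhi_X^t)^*\alpha=\alpha$ for all $t$, I differentiate at $t=0$ and obtain $\mathcal L_X\alpha=0$ directly from the definition of the Lie derivative. Such an $X$ is Killing because the flow consists of isometries; equivalently, differentiating $(\varPhi_X^t)^*g=g$ gives $\mathcal L_Xg=0$.

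For the inclusion ``$\supseteq$'': let $X$ be Killing with $\mathcal L_X\alpha=0$. The flow $\varPhi_X^t$ is complete because $M$ is closed, and it consists of isometries. I then use the standard identity
\[
\frac{\d}{\d t}(\varPhi_X^t)^*\alpha=(\varPhi_X^t)^*\mathcal L_X\alpha=0,
\]
which gives $(\varPhi_X^t)^*\alpha=\alpha$ for all $t$. Hence $\varPhi_X^t\in\mathrm{Mag}_{\mathrm{ex}}(M,g,\alpha)$ for every $t$, and $X$ lies in the Lie algebra.

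The only delicate point is justifying the Lie group structure, namely that the Lie algebra of the closed subgroup is exactly the set of generators whose one-parameter groups stay inside it. I plan to cite Myers--Steenrod and the closed subgroup theorem for this rather than argue it from scratch. Once that is in place, the remaining argument is the routine flow computation above.
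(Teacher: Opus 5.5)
Your proposal is correct and follows the paper's argument: identify $\Lie(\Isom(M,g))$ with the Killing fields, then use the equivalence $\mathcal L_X\alpha=0 \iff (\varPhi_X^t)^*\alpha=\alpha$ for all $t$. Your version spells out what the paper calls ``well known'' and ``standard'': Myers--Steenrod, the closed subgroup theorem, and the flow computation.
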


\begin{rem}
    \Cref{lem: Lie algebra Mag_ex} also holds in the case where $(M,g)$ is complete.
\end{rem}

\begin{proof}
Since $M$ is closed, $(M,g)$ is complete. Thus, it is well known that the Lie algebra of $\Isom(M,g)$ consists of Killing vector fields. For a Killing vector field $X$ on $M$ with its associated flow $\phi_X^t$, one has the standard identity
\[
\mathcal L_X\alpha=0
\quad\Longleftrightarrow\quad
(\phi_X^t)^*\alpha=\alpha \qquad \forall t\in\mathbb R.
\]
This is equivalent to $\phi_X^t\in \mathrm{Mag}_{\mathrm{ex}}(M,g,\alpha)$ for all $t$, hence to $X$ lying in the Lie algebra.
\end{proof}
Before discussing the action of the group of exact magnetomorphisms, we state a result of independent interest.

\begin{lem}\label{lem:properties_exact_magnetomorphisms}
    Given two exact magnetic systems $(M_1,g_1,\d\alpha_1)$ and $(M_2,g_2,\d\alpha_2)$ that are exactly magnetomorphic, the following holds:
    \begin{enumerate}[label=(\arabic*)]
        \item\label{it:1_lemm_prop_ex_magn_group} the Mañé critical values of $(M_1,g_1,\d\alpha_1)$ and $(M_2,g_2,\d\alpha_2)$ coincide,
        \item\label{it:2_lemm_prop_ex_magn_group} the magnetic system $(M_1,g_1,\d\alpha_1)$ is Killing if and only if $(M_2,g_2,\d\alpha_2)$ is.
    \end{enumerate}
\end{lem}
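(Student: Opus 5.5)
Fix an exact magnetomorphism $\varPhi\colon M_1\to M_2$, so that $\varPhi^*g_2=g_1$ and $\varPhi^*\alpha_2=\alpha_1$. Both claims follow by transporting the relevant structures along $\varPhi$ and its tangent map $\d\varPhi\colon TM_1\to TM_2$. The key preliminary observation concerns the magnetic Lagrangians $L_i(q,v)=\tfrac12|v|_{g_i}^2-(\alpha_i)_q(v)$. Because $\varPhi$ is an isometry and pulls $\alpha_2$ back to $\alpha_1$, we have
\[
L_2\circ \d\varPhi = L_1 .
\]
Notice that only an exact magnetomorphism gives this identity. For an ordinary magnetomorphism, $\alpha_1$ and $\varPhi^*\alpha_2$ would differ by a closed form, and that closed form can change the Mañé value. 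This is why the statement is made for exact magnetomorphisms.

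For \ref{it:1_lemm_prop_ex_magn_group}, take any $T>0$ and any closed curve $\gamma\in C^\infty(\mathbb R/T\mathbb Z,M_1)$. Then $\varPhi\circ\gamma$ is a closed curve in $M_2$ with the same period. Its velocity is $\d\varPhi(\dot\gamma)$, so the identity above gives
\[
S_{L_2+\kappa}(\varPhi\circ\gamma)=S_{L_1+\kappa}(\gamma)
\]
for every $\kappa\in\RR$. Composition with $\varPhi$ is a bijection between closed curves of period $T$ on $M_1$ and on $M_2$, with inverse given by composition with $\varPhi^{-1}$. Hence the admissible sets of $\kappa$ in \eqref{eq: strict mane value} coincide for the two systems, and so do their infima.

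For \ref{it:2_lemm_prop_ex_magn_group}, let $X_i$ denote the $g_i$-dual of $\alpha_i$. I will first show that $\varPhi_*X_1=X_2$. For $w\in T_{\varPhi(q)}M_2$, write $w=\d\varPhi_q(u)$. Then
\[
g_2\bigl(\d\varPhi X_1,\d\varPhi u\bigr)=g_1(X_1,u)=(\alpha_1)_q(u)=(\alpha_2)_{\varPhi(q)}(\d\varPhi u),
\]
so $\varPhi_*X_1$ is indeed the $g_2$-dual of $\alpha_2$. Consequently the flows are conjugate, $\phi^t_{X_2}=\varPhi\circ\phi^t_{X_1}\circ\varPhi^{-1}$. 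A composition of isometries is an isometry, so $\phi^t_{X_2}$ acts isometrically on $(M_2,g_2)$ exactly when $\phi^t_{X_1}$ does on $(M_1,g_1)$. Equivalently, $\mathcal L_{X_2}g_2=(\varPhi^{-1})^*\mathcal L_{X_1}g_1$.

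I expect no real obstacle here. The only point needing care is stating explicitly that the bijection of closed-curve spaces preserves the period $T$, so that the sets defining $c(L)$ match exactly.
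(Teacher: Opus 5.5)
Your proof is correct and takes the same approach as the paper, which simply says both claims follow immediately from the definitions of exact magnetomorphisms and of the Mañé critical value. You supply the details the paper omits: the identity $L_2\circ \d\varPhi = L_1$ together with the period-preserving bijection of closed curves gives (1), and $\varPhi_*X_1 = X_2$, so that the flows are conjugate by an isometry, gives (2).
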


\begin{rem}
    First, we note that the conclusion in \ref{it:1_lemm_prop_ex_magn_group} also holds if one considers weakly exact magnetic systems; for a precise notion, we refer to \cite{CFP10} and the references therein.
\end{rem}

\begin{rem}
    We stress that, if two (weakly) exact magnetic systems are magnetomorphic but not exactly magnetomorphic, then their Mañé critical values are not necessarily the same. So, in order to preserve this dynamical energy threshold, the right notion is that of the group of exact magnetomorphisms.
\end{rem}

\begin{proof}
    This follows immediately from the definition of the group of exact magnetomorphisms, exact magnetomorphisms in \Cref{defn: exact magn group} and from the definition of the Mañé critical value~\eqref{eq: strict mane value}.
\end{proof}
We now show that the group of exact magnetomorphisms induces a Hamiltonian action, that is:
\begin{prop}
\label{thm: Hamiltonian action of ExMag}
Let $(M,g,\d\alpha)$ be an exact magnetic system on a closed manifold and set
\[
G:=\mathrm{Mag}_{\mathrm{ex}}(M,g,\alpha), \qquad \mathfrak g:=\Lie(G).
\]
Then the natural action of $G$ on $(TM,\omega_{\d\alpha})$ is Hamiltonian, with moment map given by
\[
\mu: TM\longrightarrow \mathfrak{g}^*,\quad(q,v)\mapsto \left[ X\mapsto
g_q(v,X_q)-\alpha_q(X_q)\right].
\]
\end{prop}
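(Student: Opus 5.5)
The plan is to verify the moment map identity $\d F_X=\iota_{X^\#}\omega_{\d\alpha}$ directly, where $F_X(q,v)=g_q(v,X_q)-\alpha_q(X_q)$, and then check equivariance. I would exploit the sign convention $\omega_{\d\alpha}=\d\lambda-\pi_{TM}^*\d\alpha$, where $\lambda$ is the pullback of the Liouville form to $TM$ via $g$. Concretely, $\lambda_{(q,v)}(\xi)=g_q(v,\d\pi\,\xi)$. Writing $\tilde\lambda:=\lambda-\pi^*\alpha$, we get $\omega_{\d\alpha}=\d\tilde\lambda$, since $\d(\pi^*\alpha)=\pi^*\d\alpha$. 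So the twisted form is exact with a primitive that the lifted action should preserve.

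\textbf{Step 1: invariance of $\tilde\lambda$.} For $\psi\in G$, let $T\psi$ denote its tangent lift. Since $\psi$ is an isometry, the standard computation gives $(T\psi)^*\lambda=\lambda$, because $\pi\circ T\psi=\psi\circ\pi$ and $g(d\psi v,d\psi w)=g(v,w)$. Since $\psi^*\alpha=\alpha$, we also get $(T\psi)^*\pi^*\alpha=\pi^*\psi^*\alpha=\pi^*\alpha$. Hence $\tilde\lambda$ is $G$-invariant, and infinitesimally $\mathcal L_{X^\#}\tilde\lambda=0$ for every $X\in\mathfrak g$. Here I use \Cref{lem: Lie algebra Mag_ex}: $X$ is Killing and $\mathcal L_X\alpha=0$.

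\textbf{Step 2: Cartan's formula.} From $0=\mathcal L_{X^\#}\tilde\lambda=\d\iota_{X^\#}\tilde\lambda+\iota_{X^\#}\d\tilde\lambda$ we obtain $\iota_{X^\#}\omega_{\d\alpha}=-\d(\tilde\lambda(X^\#))$. By \Cref{lem: infinitesimal generator}, $\d\pi(X^\#(q,v))=X_q$. Therefore
\[
\tilde\lambda(X^\#)(q,v)=g_q(v,X_q)-\alpha_q(X_q)=F_X(q,v).
\]
Thus $X^\#$ is Hamiltonian with Hamiltonian $F_X$, up to the overall sign convention for $\iota_{X_H}\omega=\pm\d H$. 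I would fix the convention consistent with the one making the kinetic energy generate the magnetic flow, and adjust $\mu$ by a sign if necessary. The formula in the statement matches the convention $\iota_{X_H}\omega=-\d H$.

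\textbf{Step 3: equivariance.} For $\psi\in G$, I would compute $F_X(T\psi(q,v))=\tilde\lambda_{T\psi(q,v)}(X^\#)$. Using invariance of $\tilde\lambda$ and the identity $(T\psi)^{-1}_*X^\#=(\mathrm{Ad}_{\psi^{-1}}X)^\#$, this equals $F_{\mathrm{Ad}_{\psi^{-1}}X}(q,v)$, i.e. $\mu\circ T\psi=\mathrm{Ad}^*_{\psi}\circ\mu$. Linearity of $X\mapsto F_X$ is evident from the formula.

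The main obstacle is bookkeeping of conventions: the sign of $\omega_{\d\alpha}$, the sign in the Hamiltonian vector field definition, and the sign in the adjoint action for left versus right generators. The underlying mechanism, an exact invariant primitive, is robust. A secondary point is that $G$ is a Lie group, being a closed subgroup of the compact group $\Isom(M,g)$, so $\mathfrak g$ makes sense.
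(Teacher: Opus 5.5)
Your proof is correct and is essentially the paper's argument. The paper also derives $\iota_{X^\#}\omega_{\d\alpha}=-\d\big(g(v,X)-\alpha(X)\big)$ from $\mathcal L_X\alpha=0$ and Cartan's formula, applying it on $M$ to the magnetic term and treating the canonical term as standard. It likewise gets equivariance from $G$-invariance of $g$ and $\alpha$. Your single $G$-invariant primitive $\lambda-\pi_{TM}^*\alpha$ handles both terms and equivariance at once, which is slightly cleaner. Your sign hedging is unnecessary: $\iota_{X_H}\omega_{\d\alpha}=-\d H$ is exactly the convention under which $E$ generates $\nabla_{\dot\gamma}\dot\gamma=Y_\gamma\dot\gamma$, and it is the one the paper uses.
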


\begin{rem}
    \Cref{thm: Hamiltonian action of ExMag} recovers \cite[Cor.~1.5]{ABM} for the sphere endowed with the round metric and the standard contact form. In that case, the Hamiltonian action is described on the full magnetomorphism group. Since the magnetomorphisms of the sphere are restrictions of unitary transformations, they preserve the primitive as well, and therefore belong to the exact magnetomorphism group.
\end{rem}

\begin{proof}
The action is symplectic since each element of $G$ preserves both the metric and $\alpha$. For $X\in\mathfrak g$, using $\mathcal L_X\alpha=0$ and Cartan's formula, one computes 
\[
- \iota_{X^{\#}} (\pi_{TM}^* \d \alpha) = - \pi_{TM}^* (\iota_X  \d \alpha) = \pi_{TM}^*(\d \iota_X \alpha) = \d (\alpha(X) \circ \pi_{TM}) ,
\]
which implies
\[
\iota_{X^\#}\omega_{\d\alpha}
=
-\d\big(g(v,X)-\alpha(X)\big),
\]
so the stated function is a Hamiltonian for $X^\#$. Equivariance follows from the invariance of $g$ and $\alpha$ under $G$, using also that the adjoint action of $\psi \in G$ is $\mathrm{Ad}_\psi(X) = \d \psi \cdot X_{\psi^{-1}} \,$.
\end{proof}
In the proof of \Cref{thm: Hamiltonian action of ExMag} we have to use the isometry property since we pull back the canonical Liouville form to the tangent bundle via the metric. Instead, we can drop the metric and consider a Hamiltonian action on the cotangent bundle. This observation is of independent interest, since Hamiltonian group actions on the cotangent bundle have been studied extensively. Let us summarize this: 

\begin{cor}
    \label{thm: Hamiltonian action cotangent bundle}
Let $G\subseteq \mathrm{Diff}(M)$ be a finite-dimensional Lie subgroup with Lie algebra $\mathfrak{g}$, acting smoothly on $M$, and suppose that
\[
\psi^*\alpha = \alpha \qquad \forall \psi\in G.
\]
Endow the cotangent bundle $T^*M$ with the twisted symplectic form
\[
\d \lambda_{\mathrm{can}} - (\pi_{T^*M})^* \d \alpha \,\, ,
\]
where $\lambda_{\mathrm{can}}$ denotes the Liouville $1$-form on $T^*M$ and $\alpha\in \Omega^1(M)$.\\
\noindent Then the natural action of $G$ on the cotangent bundle $T^*M$,
\[
G \curvearrowright T^*M \comma \quad \psi \cdot (q,p) := (\psi(q), \, p \circ (d\psi_q)^{-1}),
\]
is Hamiltonian. A moment map is given by
\[
\mu: T^*M\longrightarrow \mathfrak{g}^*,\quad(q,p)\mapsto \left[X\mapsto p(X_q) - \alpha_q (X_q)\right].
\]
\end{cor}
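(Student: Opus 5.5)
The plan is a direct computation with the canonical Liouville form, in the spirit of the proof of \Cref{thm: Hamiltonian action of ExMag}. The metric plays no role here.

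First, the action is symplectic. The cotangent lift of any diffeomorphism $\psi$, written $\tilde\psi(q,p)=(\psi(q),p\circ(d\psi_q)^{-1})$, preserves $\lambda_{\mathrm{can}}$. This is the standard naturality of the Liouville form, and it gives $\tilde\psi^*\d\lambda_{\mathrm{can}}=\d\lambda_{\mathrm{can}}$. Since $\pi_{T^*M}\circ\tilde\psi=\psi\circ\pi_{T^*M}$ and $\psi^*\alpha=\alpha$, we also get $\tilde\psi^*\pi_{T^*M}^*\d\alpha=\pi_{T^*M}^*\d\alpha$. Hence $G$ acts by symplectomorphisms of the twisted form.

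Second, I identify the Hamiltonian of each generator. For $X\in\mathfrak g$, let $X^\#$ be the infinitesimal generator of the lifted action on $T^*M$. Invariance of $\lambda_{\mathrm{can}}$ gives $\mathcal L_{X^\#}\lambda_{\mathrm{can}}=0$. By Cartan's formula,
\[
\iota_{X^\#}\d\lambda_{\mathrm{can}}=-\d\big(\lambda_{\mathrm{can}}(X^\#)\big)=-\d\big(p(X_q)\big),
\]
where I use $\d\pi_{T^*M}(X^\#)=X$. For the magnetic term, differentiating $\psi^*\alpha=\alpha$ along the flow gives $\mathcal L_X\alpha=0$. Then, exactly as in the proof of \Cref{thm: Hamiltonian action of ExMag},
\[
-\iota_{X^\#}\pi_{T^*M}^*\d\alpha=-\pi_{T^*M}^*(\iota_X\d\alpha)=\pi_{T^*M}^*\d(\alpha(X)).
\]
Adding the two gives $\iota_{X^\#}\omega=-\d\big(p(X_q)-\alpha_q(X_q)\big)$. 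So $\langle\mu,X\rangle$ is a Hamiltonian for $X^\#$, with the same sign convention as in \Cref{thm: Hamiltonian action of ExMag}. The map $\mu$ is also visibly linear in $X$.

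Third, I verify equivariance, $\mu(\tilde\psi(q,p))=\mathrm{Ad}^*_{\psi^{-1}}\mu(q,p)$, using $\mathrm{Ad}_\psi X=\d\psi\cdot X\circ\psi^{-1}$. I compute
\[
\langle\mu(\tilde\psi(q,p)),X\rangle=p\big((d\psi_q)^{-1}X_{\psi(q)}\big)-\alpha_{\psi(q)}(X_{\psi(q)}).
\]
The first term equals $p\big((\mathrm{Ad}_{\psi^{-1}}X)_q\big)$. The second equals $(\psi^*\alpha)_q\big((\mathrm{Ad}_{\psi^{-1}}X)_q\big)=\alpha_q\big((\mathrm{Ad}_{\psi^{-1}}X)_q\big)$ by invariance of $\alpha$. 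Together these give the claim.

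The only delicate point is bookkeeping: fixing sign conventions for the Hamiltonian vector field and for the left versus right nature of the adjoint action so they match \Cref{thm: Hamiltonian action of ExMag}. I expect no genuine obstacle.
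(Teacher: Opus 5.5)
Your proposal is correct and follows the paper's route. The paper proves the corollary by declaring it completely analogous to \Cref{thm: Hamiltonian action of ExMag}: symplecticity comes from invariance, the magnetic term is handled by Cartan's formula with $\mathcal L_X\alpha=0$, and equivariance follows from $\psi^*\alpha=\alpha$ together with $\mathrm{Ad}_\psi X=\d\psi\cdot X_{\psi^{-1}}$. You additionally make explicit the canonical part $\iota_{X^\#}\d\lambda_{\mathrm{can}}=-\d(p(X_q))$ via $\mathcal L_{X^\#}\lambda_{\mathrm{can}}=0$, and you write out the equivariance computation, both of which the paper leaves implicit.
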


\begin{rem}
  Since the natural action of $G$ on $M$ is assumed to be smooth, the one-parameter subgroups of $G$ induce complete flows on $M$, and we can identify the Lie algebra $\mathfrak{g}$ of $G$ with the vector fields on $M$ generating these flows.   
\end{rem}

\begin{proof}
The proof is completely analogous to the one of \Cref{thm: Hamiltonian action of ExMag}.
\end{proof}

\subsection{Illustrations}\label{subsection:illustrations_intregrals_of_motion}
We now discuss several examples illustrating how \Cref{thm: Hamiltonian action of ExMag} and ~\Cref{thm:lie_algebra_mag_group_encodes_integrals_of_motion} produce explicit first integrals from magnetic symmetries.
\paragraph{\textbf{Ellipsoids.}}
For $n\geq 2$, consider the magnetic system \[(\S,g_A,\d\alpha_A)\] from \Cref{section: Illustration on Ellipsoids}. First observe that the maps
\begin{equation}
\label{eq: example Hamiltonian action on ellipsoid}
\mathrm{diag}(e^{\i\varphi_1},\ldots,e^{\i\varphi_n})|_{\S},
\qquad \varphi_1,\ldots,\varphi_n\in\IR/2\pi\IZ,
\end{equation}
are $g_A$-isometries preserving $\alpha_A$, and hence by their diagonal form define a torus $T$ in
\[
T\subseteq\mathrm{Mag}_{\mathrm{ex}}(\S,g_A,\alpha_A).
\]
The Lie algebra of the torus $T$ is generated by the vector fields
\[
(X_1)_z=(\i z_1,0,\ldots,0) \quad  , \,\,\ldots \,\, ,\quad (X_n)_z=(0,\ldots,0,\i z_n),
\]
which are pointwise linearly independent on the dense open subset
\[
\{z\in \S \mid z_j\neq 0 \text{ for all } j\}.
\]
Consequently, the lifted vector fields $X_1^\#,\ldots,X_n^\#$ are linearly independent on a dense open subset of $T\S$. Theorems~\ref{thm: Hamiltonian action of ExMag} and~\ref{thm:lie_algebra_mag_group_encodes_integrals_of_motion} therefore yield $n+1$ independent integrals of motion. This provides an alternative point of view on Proposition~\ref{Prop:intergrals of motion of electromagnetic ELE on ellipsoid} in the case of vanishing potential. In fact, the integrals of motion $F_{X_1}, \ldots , F_{X_n}$ provided by \Cref{thm:lie_algebra_mag_group_encodes_integrals_of_motion} are precisely the conjugate momenta $c_1,\ldots , c_n$ appearing in the proof of \Cref{Prop:intergrals of motion of electromagnetic ELE on ellipsoid}.\\

\paragraph{\textbf{Homogeneous spaces.}}
Let $M=G/H$ be a compact homogeneous space endowed with a $G$-invariant Riemannian metric $g$ and a $G$-invariant $1$-form $\alpha$. Then the natural $G$-action on $M$ preserves both $g$ and $\alpha$, and hence defines a subgroup
\[
G\subseteq \mathrm{Mag}_{\mathrm{ex}}(M,g,\alpha).
\]
By Proposition~\ref{thm: Hamiltonian action of ExMag}, the lifted action of $G$ on $(TM,\omega_{\d\alpha})$ is Hamiltonian with moment map
\[
\mu(q,v)(X)=g_q(v,X_q)-\alpha_q(X_q).
\]
Let $T\subseteq G$ be a torus with Lie algebra $\mathfrak t$. Then for every $X\in\mathfrak t$ the functions
\[
F_X(q,v)=g_q(v,X_q)-\alpha_q(X_q)
\]
are integrals of motion which Poisson-commute. If the corresponding infinitesimal generators are generically linearly independent, Theorem~\ref{thm:lie_algebra_mag_group_encodes_integrals_of_motion} yields $\dim T$ independent first integrals, and together with the energy this gives a symmetry-based integrability mechanism on $G/H$.\\

\paragraph{\textbf{Surfaces of revolution.}}
Let $M$ be a surface of revolution with metric
\[
g=\d r^2+f(r)^2\,\d\theta^2
\]
and consider a magnetic system given by a rotationally symmetric primitive
\[
\alpha=a(r)\,\d\theta.
\]
Then the vector field $Y=\partial_\theta$ generates an $S^1$-action preserving both $g$ and $\alpha$, hence
\[
S^1\subseteq \mathrm{Mag}_{\mathrm{ex}}(M,g,\alpha).
\]
By Proposition~\ref{thm: Hamiltonian action of ExMag}, the corresponding moment map yields the conserved quantity
\[
F(q,v)=g_q(v,Y_q)-\alpha_q(Y_q).
\]
In coordinates this takes the explicit form
\[
F(r,\theta,\dot r,\dot\theta)=f(r)^2\dot\theta-a(r),
\]
which is constant along magnetic geodesics. Together with the energy
\[
E=\tfrac12\big(\dot r^2+f(r)^2\dot\theta^2\big),
\]
this provides two Poisson-commuting integrals, and hence complete integrability on regular energy levels.

\section{Totally magnetic submanifolds as fixed-point sets of magnetomorphisms}
\label{subsection: magnetic geometry - abstract results}

In this section, we relate totally magnetic submanifolds to the magnetomorphism group. We first prove two abstract results and then apply them to strictly convex hypersurfaces in Euclidean space. This leads to a description of the magnetomorphism groups of complex ellipsoids and to a family of totally magnetic submanifolds.

\subsection{Abstract theorems.}

Our first result is the magnetic analogue of the classical fact in Riemannian geometry that the fixed-point set of a collection of isometries is a totally geodesic submanifold (see \cite[Prop.~5.6.5]{Pet}).

\begin{prop}
\label{prop: totally magnetic submfds - fixed point set}
Let $\mathcal{M} \subseteq \mathrm{Mag}(M,g,\sigma)$ be a subset of magnetomorphisms. Then each connected component of the fixed-point set
\[
\mathrm{Fix}(\mathcal{M}) = \set{p \in M}{F(p) = p \quad \forall \, F \in \mathcal{M}}
\]
is a totally magnetic submanifold of $(M,g,\sigma)$.
\end{prop}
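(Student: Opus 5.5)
We plan to reduce the statement to the classical Riemannian fact together with the characterization in \Cref{t:totally_magnetic_submanifolds_characterization_ABM}(iii). Since every $F\in\mathcal{M}$ satisfies $F^*g=g$, the set $\mathcal{M}$ consists of isometries of $(M,g)$. By the classical result \cite[Prop.~5.6.5]{Pet}, each connected component $N$ of $\mathrm{Fix}(\mathcal{M})$ is an embedded totally geodesic submanifold of $(M,g)$. Moreover, for $p\in N$ we have
\[
T_pN=\bigcap_{F\in\mathcal{M}}\ker(\d F_p-\id).
\]
Indeed, the exponential map at $p$ intertwines $\d F_p$ with $F$, so locally $\exp_p$ identifies a neighbourhood of $p$ in $\mathrm{Fix}(\mathcal{M})$ with a neighbourhood of $0$ in this common fixed subspace. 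If the cited proposition only treats a single isometry, we can pass to the closed subgroup generated by $\mathcal{M}$, which has the same fixed-point set, and use the same exponential-chart argument.

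By \Cref{t:totally_magnetic_submanifolds_characterization_ABM}(iii), it then remains to show $Y_p(T_pN)\subseteq T_pN$ for all $p\in N$. The key step is the equivariance of the Lorentz force. From $F^*g=g$ and $F^*\sigma=\sigma$ we get, for $u,v\in T_pM$,
\[
g_p(Y_p\,\d F_p u,\d F_p v)=\sigma_p(\d F_pu,\d F_pv)=\sigma_p(u,v)=g_p(Y_pu,v)=g_p(\d F_pY_pu,\d F_pv),
\]
using $F(p)=p$. Since $\d F_p$ is invertible, this gives $Y_p\circ\d F_p=\d F_p\circ Y_p$.

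Now take $u\in T_pN$. Then $\d F_pu=u$ for all $F\in\mathcal{M}$, so $\d F_p(Y_pu)=Y_p(\d F_pu)=Y_pu$. Hence $Y_pu$ lies in every $\ker(\d F_p-\id)$, and therefore in $T_pN$. This completes the verification of condition (iii), and the proposition follows.

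The main point needing care is the identification of $T_pN$ with the common fixed space of the differentials, including the embeddedness of $N$ when $\mathcal{M}$ is infinite. We handle both via the exponential map: for $r$ below the injectivity radius, $\exp_p$ maps the fixed subspace within the ball of radius $r$ bijectively onto $\mathrm{Fix}(\mathcal{M})\cap B_r(p)$. This holds because a fixed point $q$ near $p$ is joined to $p$ by a unique minimizing geodesic, and that geodesic must be preserved by every $F\in\mathcal{M}$. The equivariance computation itself is routine.
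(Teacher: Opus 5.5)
Your proposal is correct and follows the paper's route: it uses the Riemannian fixed-point result \cite[Prop.~5.6.5]{Pet} to get totally geodesic components with tangent space the common fixed subspace $\mathrm{Z}_p$. It then applies criterion (iii) of \Cref{t:totally_magnetic_submanifolds_characterization_ABM} together with $Y_p\circ \mathrm{d}F_p=\mathrm{d}F_p\circ Y_p$ to conclude that $Y_p$ preserves $\mathrm{Z}_p$; you also derive this commutation from $F^*g=g$, $F^*\sigma=\sigma$ and justify the exponential chart, details the paper only asserts.
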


To prove the proposition, we recall the definition of the Zariski tangent space. Let $\mathcal{M} \subseteq \mathrm{Isom}(M,g)$ be a subset of isometries. For $p \in \mathrm{Fix}(\mathcal{M})$, the Zariski tangent space at $p$ is defined by
\begin{equation}
\label{eq: totally magnetic submfds - Zariski space}
\mathrm{Z}_p := \set{v \in T_p M}{ \d_p F \cdot v = v \quad \forall \, F \in \mathcal{M}} \subseteq T_p M \, .
\end{equation}
By \cite[Prop.~5.6.5]{Pet}, for $\varepsilon > 0$ sufficiently small, the map
\[
\exp_p^M : B_\varepsilon(0) \cap \mathrm{Z}_p \longrightarrow B_\varepsilon(p) \cap \mathrm{Fix}(\mathcal{M})
\]
is a well-defined slice chart. In particular, the connected components of $\mathrm{Fix}(\mathcal{M})$ are totally geodesic submanifolds with tangent space $\mathrm{Z}_p$ at $p$.

\begin{proof}[Proof of \Cref{prop: totally magnetic submfds - fixed point set}]
Since $\mathcal{M} \subseteq \mathrm{Mag}(M,g,\sigma)$ consists of isometries, the above discussion implies that the connected components of $\mathrm{Fix}(\mathcal{M})$ are totally geodesic submanifolds of $(M,g)$. By \Cref{t:totally_magnetic_submanifolds_characterization_ABM}, it remains to show that for every $p \in \mathrm{Fix}(\mathcal{M})$ the Lorentz force
\[
Y_p : T_p M \longrightarrow T_p M
\]
preserves $\mathrm{Z}_p$.

Let $v \in \mathrm{Z}_p$ and $F \in \mathcal{M}$ be arbitrary. Since $F$ is a magnetomorphism, its differential commutes with the Lorentz force, and hence
\[
\d_p F \cdot Y_p(v) = Y_p(\d_p F \cdot v) = Y_p(v),
\]
where the second equality follows from $v \in \mathrm{Z}_p$ and \eqref{eq: totally magnetic submfds - Zariski space}. 

Since $F$ was arbitrary, we conclude that $Y_p(v) \in \mathrm{Z}_p \,$.
\end{proof}
The next result shows that the class of totally magnetic submanifolds is stable under intersections.
\begin{prop}
\label{prop: totally magnetic submfds - intersections}
Let $N_1, N_2 \subseteq M$ be totally magnetic submanifolds of $(M,g,\sigma)$. Then each connected component of $$N = N_1 \cap N_2$$ is a totally magnetic submanifold of $(M,g,\sigma)$ with tangent space $T_p N_1 \cap T_p N_2$ at $p \in N$.
\end{prop}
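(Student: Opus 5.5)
The plan is to reduce to the Riemannian statement via \Cref{t:totally_magnetic_submanifolds_characterization_ABM}. By item (iii) there, $N_1$ and $N_2$ are totally geodesic in $(M,g)$ and $Y_p(T_pN_i)\subseteq T_pN_i$ for $p\in N_i$. The argument then has two steps.

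\textbf{Step 1.} Show that each connected component of $N=N_1\cap N_2$ is a totally geodesic submanifold with $T_pN=T_pN_1\cap T_pN_2$. For $p\in N$, let $V:=T_pN_1\cap T_pN_2$ and choose $\varepsilon>0$ below the injectivity radius at $p$ and small enough that, for $i=1,2$,
\[
\exp_p^M\colon B_\varepsilon(0)\cap T_pN_i\to N_i
\]
is a slice chart onto a neighbourhood $U_i$ of $p$ in $N_i$. This works because a geodesic of $M$ tangent to a totally geodesic $N_i$ stays in $N_i$, so $\exp^M_p$ agrees with $\exp^{N_i}_p$ on small vectors.

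The claim is that $\exp_p^M(B_\varepsilon(0)\cap V)=B_\varepsilon(p)\cap N$ near $p$. The inclusion $\subseteq$ is immediate. For $\supseteq$, take $q\in N$ close to $p$. Then $q=\exp_p^M(w_i)$ with $w_i\in T_pN_i$ and $|w_i|<\varepsilon$. Since $\exp_p^M$ is injective on $B_\varepsilon(0)$, we get $w_1=w_2\in V$.

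This needs care, and I expect it to be the main obstacle. The point is that $q$ lies in the embedded pieces $U_1$ and $U_2$, not merely in $N_i$. For an embedded submanifold $N_i$, every point of $N_i$ sufficiently close to $p$ in $M$ lies in the slice neighbourhood $U_i$, after shrinking $\varepsilon$. With that in hand, $N\cap B_{\varepsilon'}(p)=\exp_p^M(B_{\varepsilon'}(0)\cap V)$ is a slice chart, so $N$ is an embedded submanifold near $p$ with $T_pN=V$. Since $p$ was arbitrary and $V$ may vary in dimension, this holds componentwise. Total geodesy follows because a geodesic tangent to $V$ is tangent to both $N_i$, hence stays in both for short time.

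\textbf{Step 2.} Check the Lorentz force condition. If $v\in T_pN=T_pN_1\cap T_pN_2$, then $Y_pv\in T_pN_1$ and $Y_pv\in T_pN_2$ by (iii) for each $N_i$, so $Y_p(T_pN)\subseteq T_pN$. Applying \Cref{t:totally_magnetic_submanifolds_characterization_ABM} (iii)$\Rightarrow$(i) to each component of $N$ finishes the proof.

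An alternative to Step 1 is to note directly that a magnetic geodesic tangent to $T_pN_1\cap T_pN_2$ stays in both $N_i$ for short time by \Cref{defn: totally magnetic submfds}. This gives invariance, but the submanifold structure still needs the exponential-map argument above. So Step 1 remains the crux.
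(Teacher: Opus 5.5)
Your proposal is correct and follows the paper's proof: the paper also obtains the submanifold structure from the bijection $\exp^M_p\colon B_\varepsilon(0)\cap T_pN_1\cap T_pN_2\to B_\varepsilon(p)\cap N$ (asserting it without the embeddedness care you supply). The only difference is that the paper checks invariance directly from \Cref{defn: totally magnetic submfds}, exactly as in your stated alternative, rather than through $Y_p$-invariance of $T_pN_1\cap T_pN_2$ and the implication (iii)$\Rightarrow$(i) of \Cref{t:totally_magnetic_submanifolds_characterization_ABM}; both work.
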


\begin{proof}[Proof of \Cref{prop: totally magnetic submfds - intersections}]
Let $p \in N = N_1 \cap N_2$. Since $N_1$ and $N_2$ are totally geodesic submanifolds, the exponential map
\[
\exp^M_p : B_\varepsilon(0) \longrightarrow B_\varepsilon(p)
\]
restricts to a bijection from $B_\varepsilon(0) \cap T_p N_1 \cap T_p N_2$ onto $B_\varepsilon(p) \cap N$. Hence the connected components of $N$ are submanifolds with tangent space $T_p N_1 \cap T_p N_2$ at $p$.

Moreover, every magnetic geodesic of $(M,g,\sigma)$ starting at $p$ with initial velocity in $T_p N_1 \cap T_p N_2$ remains in both $N_1$ and $N_2$ for small time, and thus in the connected component of $N$ containing $p$.
\end{proof}

\subsection{Illustration}
\label{subsection: magnetic geometry- illustration}

We now apply the above results to strictly convex hypersurfaces in Euclidean space. We first determine their magnetomorphism groups and then use this description to construct totally magnetic submanifolds of complex ellipsoids.

\subsubsection{Classification of the magnetomorphisms on convex hypersurfaces}
\label{subsubsection: magnetomorphism group convex hypersurfaces}
This subsection is devoted to the characterization of the magnetomorphism group of a strictly convex hypersurface $H \subseteq \RR^{2n} , \, n \geq 2 \,$. We first describe the setting in detail. As in the case of ellipsoids in \Cref{section: Illustration on Ellipsoids}, we equip $H$ with the metric
\[
g=\mathrm{Re}\langle\cdot,\cdot\rangle|_{TH},
\]
that is, the restriction of the Euclidean metric to $H$. Moreover, restricting the $1$-form
\[
\alpha_z := \tfrac12\,\Im\langle z,\cdot\rangle,\quad z\in \RR^{2n},
\]
to $TH$ defines a magnetic potential on $H$ whose exterior derivative is the restriction of the canonical symplectic form on $\IR^{2n}$ to $H$.

In order to state the main result of this subsection, we fix the following notation. For $A \in \CC^{n \times n}$ and $b \in \C^n$, let
\begin{equation}\label{eq:affine_trafo}
    F_{A,b} : \C^n \longrightarrow \C^n, \qquad F_{A,b}(z) = Az + b
\end{equation}
denote the corresponding affine map. We now state the result on the magnetomorphism groups of strictly convex hypersurfaces:

\begin{prop}
\label{prop: magnetomorphism group}
The magnetomorphism group of the system $(H,g,\d\alpha)$ is the group of affine transformations of the form \eqref{eq:affine_trafo} whose linear part is unitary and that preserve $H$, that is,
\begin{align*}
\mathrm{Mag}(H , \, g , \, \d\alpha)=\set{F_{U,b}}{ U \in \mathrm{U}(n),\, b \in \IR^{2n},\, F_{U,b}(H) = H }\, .
\end{align*}
If the maximally inscribed ball of the bounded domain enclosed by $H$ is centered at the origin, then the magnetomorphism group of the system $(H,g,\d\alpha)$ is given precisely by the set of unitary transformations preserving $H$, that is,
\[
\mathrm{Mag}(H , \, g , \, \d\alpha)=\set{U}{ U \in \mathrm{U}(n),\, U(H) = H }\, .
\]
\end{prop}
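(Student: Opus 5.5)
The plan is to prove the two inclusions separately. The hard one, that every magnetomorphism is an ambient unitary affine map, will come from a rigidity argument for isometries of hypersurfaces.

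\emph{Easy inclusion.} Let $F=F_{U,b}$ with $U\in\mathrm{U}(n)$ and $F(H)=H$. Then $F$ is a Euclidean isometry, so $F|_H$ preserves $g$. For the magnetic field, $\d\alpha$ is the restriction of the standard symplectic form $\omega_0=\Re\langle \i\,\cdot,\cdot\rangle$ (up to the normalisation fixed by $\alpha$). A unitary map preserves $\omega_0$, and translations preserve it as well because $\omega_0$ has constant coefficients. Hence $F^*\d\alpha=\d\alpha$ on $H$. Note that $\alpha$ itself is not translation invariant, since $F_{U,b}^*\alpha-\alpha=\tfrac12\Im\langle b,\cdot\rangle$, which is exact. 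This is why only $\d\alpha$, and not $\alpha$, is preserved.

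\emph{Hard inclusion.} Let $\psi\in\mathrm{Mag}(H,g,\d\alpha)$. First show that $\psi$ extends to a Euclidean motion. Since $H$ is a closed strictly convex hypersurface of dimension $2n-1\ge 3$, its second fundamental form is definite, so its rank is at least $3$ everywhere. The classical rigidity theorem (Beez--Killing / Cohn-Vossen type) then says that every intrinsic isometry of such a hypersurface is the restriction of a rigid motion of $\IR^{2n}$: the Gauss equation determines the second fundamental form up to sign wherever its rank is at least $3$, and the Codazzi equations together with connectedness finish the argument. So $\psi=F_{O,b}|_H$ with $O\in\mathrm{O}(2n)$. Next show $O\in\mathrm{U}(n)$. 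Write $J=\i$ and let $\nu$ be the unit normal. Along $H$ one has $\d\alpha=\omega_0|_{TH}$, so the Lorentz force is $Y(v)=\mathrm{pr}_{TH}(Jv)$. Magnetomorphism invariance gives $O\,\mathrm{pr}(Jv)=\mathrm{pr}(JOv)$ for all $v\in T_pH$. The vector $J\nu_p$ spans $\ker \d\alpha_p$, so $O$ maps $J\nu_p$ to $\pm J\nu_{\psi(p)}$. Since $O$ is a Euclidean motion preserving $H$, it also sends $\nu_p$ to $\pm\nu_{\psi(p)}$, with a sign constant on $H$. On the symplectic complement $\xi_p=\{\nu,J\nu\}^\perp$, the space $J$ preserves and $Y=J$, so $OJ=JO$ there. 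On $\mathrm{span}(\nu,J\nu)$ one gets $OJ=\pm JO$, and a consistency check with $\omega_0$ restricted to $TH$ fixes the sign. I expect to argue that $O^*\omega_0-\omega_0$ is a constant $2$-form vanishing on every tangent hyperplane $T_pH$, and that as $p$ ranges over $H$ these hyperplanes realise all directions. Since a nonzero constant $2$-form on $\IR^{2n}$ with $2n\ge4$ cannot vanish on every hyperplane, this forces $O^*\omega_0=\omega_0$. Combined with $O\in\mathrm{O}(2n)$, this gives $O\in\mathrm{U}(n)$, which is the cleanest route.

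\emph{Second statement.} Let $F_{U,b}$ preserve $H$. It then preserves the bounded domain $D$ enclosed by $H$ and maps inscribed balls of $D$ to inscribed balls of the same radius. If the maximal inscribed ball is unique, then $F_{U,b}$ fixes its center $0$, so $b=0$. When the maximal ball is not unique, I would instead use the centroid of $D$, which every affine symmetry of $D$ fixes. I would interpret the hypothesis as the symmetry-invariant center being $0$; this is the step where the statement's wording needs care. The main obstacle overall is the rigidity step: citing the right theorem, and checking that it applies globally for $2n-1\ge3$.
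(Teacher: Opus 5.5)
Your proposal is correct and follows the paper's proof: the easy inclusion via $\mathrm{U}(n)=\mathrm{O}(2n)\cap\mathrm{Sp}(2n)$, a rigidity theorem to write $\psi=F_{O,b}|_H$, then surjectivity of the Gauss map to see that the constant form $O^*\omega_0-\omega_0$ vanishes (your ``cleanest route'' is exactly the paper's argument), and finally the fixed center of the maximal inscribed ball. The only difference in the rigidity step is that the paper cites Sacksteder's theorem, which needs only nonnegative sectional curvature, whereas your Beez--Killing argument needs the second fundamental form to be definite. Your centroid fallback and reinterpretation of the hypothesis are unnecessary, because for a strictly convex domain the maximal inscribed ball is unique: if $B(x_1,r)$ and $B(x_2,r)$ were two such balls, the ball about $m=\tfrac12(x_1+x_2)$ would touch $H$ at some $m+ru$, and the supporting hyperplane there would contain both $x_1+ru$ and $x_2+ru$, hence a segment of $H$.
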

The proof of \Cref{prop: magnetomorphism group} is based on the following: 
\begin{lem}
\label{lem:affine_rigidity_and_translation}
Let $H \subseteq \IR^{2n}$ be a strictly convex hypersurface, and let $A \in \O(2n)$, $b \in \IR^{2n}$.

\begin{enumerate}
\item\label{it:1 affine_rigidity_and_translation}If $F_{A,b}|_H = \id_H$, then $A = \mathbbm{1}$ and $b = 0$.

\item\label{it:2 affine_rigidity_and_translation} Assume $F_{A,b}(H) = H$. Let $x_0$ be the center of the maximally inscribed ball of the bounded domain enclosed by $H$. Then
\[
b = (\mathbbm{1}-A)x_0.
\]
In particular, $b=0$ if $x_0=0$.
\end{enumerate}
\end{lem}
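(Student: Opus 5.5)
For part (1), the plan is to use that a strictly convex hypersurface $H$ is not contained in any proper affine subspace of $\IR^{2n}$. Its affine hull is all of $\IR^{2n}$, since it bounds a domain with nonempty interior, and any interior point is a convex combination of points of $H$. Now suppose $F_{A,b}|_H=\id_H$. The fixed-point set $\set{x\in\IR^{2n}}{Ax+b=x}$ is an affine subspace, because it is the solution set of the linear system $(\mathbbm{1}-A)x=b$. This subspace contains $H$, so it contains the affine hull of $H$, and hence it is all of $\IR^{2n}$. Thus $Ax+b=x$ for every $x$. Evaluating at $x=0$ gives $b=0$, and then $A=\mathbbm{1}$.

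For part (2), let $D$ be the bounded convex domain enclosed by $H$. I first need to check that $F_{A,b}(H)=H$ forces $F_{A,b}(D)=D$. Since $F_{A,b}$ is a homeomorphism of $\IR^{2n}$ that maps $H=\partial D$ onto itself, it permutes the components of $\IR^{2n}\setminus H$. These components are $D$, which is bounded, and the exterior, which is unbounded. An affine isometry maps bounded sets to bounded sets, so $F_{A,b}(D)=D$. Because $F_{A,b}$ is an isometry, it carries the family of balls contained in $D$ bijectively onto itself and preserves their radii. It therefore maps a maximally inscribed ball $B_\rho(x_0)$ to another maximally inscribed ball $B_\rho(Ax_0+b)$.

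The key step, and the main obstacle, is the \emph{uniqueness} of the maximally inscribed ball. The statement tacitly uses this when it refers to ``the center'', and in general it can fail for convex bodies, for example a long rectangle-like domain. I would use strict convexity as follows. Suppose $B_\rho(x_0)$ and $B_\rho(x_1)$ are both inscribed with maximal radius $\rho$ and $x_0\neq x_1$. By convexity, their convex hull lies in $\bar D$. This hull is a ``stadium'' whose boundary contains flat segments parallel to $x_1-x_0$. Each of these flat pieces must touch $H$ at some point; otherwise one could enlarge the ball centered at the midpoint $\tfrac12(x_0+x_1)$. A cleaner way to argue: the distance function $d(x)=\dist(x,H)$ is concave on $D$, and its maximum value is $\rho$. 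If $d(x_0)=d(x_1)=\rho$, then $d\equiv\rho$ along the segment $[x_0,x_1]$. Taking the midpoint $m$ and a point $p\in H$ with $|m-p|=\rho$, the supporting hyperplane at $p$ must then support the whole stadium. This forces $H$ to contain the segment $p+[x_0-m,x_1-m]$, which contradicts strict convexity. Hence $x_0$ is unique.

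Given uniqueness, we get $Ax_0+b=x_0$, that is, $b=(\mathbbm{1}-A)x_0$, and if $x_0=0$ then $b=0$.
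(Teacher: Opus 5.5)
Your proposal is correct. Part (2) follows the paper's route: the affine isometry preserves the enclosed domain, so it maps a maximally inscribed ball to a maximally inscribed ball, and uniqueness then gives $Ax_0+b=x_0$. The difference is that the paper only invokes ``uniqueness of the maximally inscribed ball'' without proof, whereas you prove it from strict convexity via concavity of $\dist(\cdot,H)$ on $\bar D$. As you note, this is exactly where strict convexity is needed, so your version is more complete.

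For part (1) you take a different and cleaner route. The paper expands $|Ax+b|^2=|x|^2=|Ax|^2$ to place $AH$ in an affine hyperplane unless $b=0$, and then uses $H\subseteq\mathrm{Eig}(1,A)$. You instead observe that the fixed-point set $\{x : (\mathbbm{1}-A)x=b\}$ is an affine subspace containing $H$, hence all of $\IR^{2n}$. This does not even use $A\in\O(2n)$.

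Two small cleanups would help:
\begin{enumerate}
\item Drop the first heuristic about the flat pieces of the stadium. It is not justified as stated, and your second argument supersedes it.
\item Make the last step explicit. Put $u=(p-m)/\rho$. Because $B_\rho(m)\subseteq\bar D$ touches $H$ at $p$, every supporting hyperplane of $\bar D$ at $p$ has normal $u$. Hence $\bar D\subseteq\{y : \langle y,u\rangle_{\IR}\le\langle m,u\rangle_{\IR}+\rho\}$. Since $x_i+\rho u\in\bar D$, this gives $\langle x_i,u\rangle_{\IR}\le\langle m,u\rangle_{\IR}$ for $i=0,1$. Averaging forces equality in both. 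So $x_0+\rho u$ and $x_1+\rho u$ lie in $\bar D$ on the supporting hyperplane, and by convexity so does the nondegenerate segment joining them. That segment therefore lies in $H$, contradicting strict convexity.
\end{enumerate}
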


\begin{proof}
(\ref{it:1 affine_rigidity_and_translation}) Since $H$ is a closed hypersurface, it is not contained in any proper affine subspace of $\IR^{2n}$. From $F_{A,b}|_H=\id_H$ we obtain
\[
|Ax+b|^2 = |x|^2 = |Ax|^2 \qquad \forall x\in H,
\]
hence
\[
2\langle Ax,b\rangle_{\IR} + |b|^2 = 0 \qquad \forall x\in H.
\]
Thus $AH$ is contained in the affine hyperplane
\[
\set{y \in \IR^{2n}}{2\langle y,b\rangle_{\IR} + |b|^2 = 0}.
\]
If $b\neq 0$, this is a proper affine subspace, which is impossible. Hence $b=0$. Then $Ax=x$ for all $x\in H$, so $H \subseteq \mathrm{Eig}(1,A)$. Since $H$ is not contained in a proper subspace, $\mathrm{Eig}(1,A)=\IR^{2n}$ and $A=\mathbbm{1}$.

(\ref{it:2 affine_rigidity_and_translation}) Let $F:=F_{A,b}$ and let $B$ be the bounded domain enclosed by $H$. Then $F(B)=B$. Since $F$ is an isometry,
\[
\dist(F(x_0),H)=\dist(x_0,H)=\sup_{x\in B}\dist(x,H).
\]
By uniqueness of the maximally inscribed ball, we obtain $F(x_0)=x_0$, hence $Ax_0+b=x_0$, which yields $b=(\mathbbm{1}-A)x_0$.
\end{proof}
We close this section by proving \Cref{prop: magnetomorphism group}.
\begin{proof}[Proof of \Cref{prop: magnetomorphism group}]
Recall that
\begin{equation}
\label{eq: magnetomorphism group - 2 out of 3}
\mathrm{U}(n) = \mathrm{O}(2n) \cap \mathrm{Sp}(2n).
\end{equation}
We first show the inclusion
\[
\set{F_{U,b}}{ U \in \mathrm{U}(n),\, b \in \IR^{2n},\, F_{U,b}(H) = H }
\subseteq \mathrm{Mag}(H,g,\d\alpha).
\]
Let $U \in \mathrm{U}(n)$ and $b \in \IR^{2n}$ satisfy $F_{U,b}(H)=H$. Since $U \in \mathrm{U}(n) \subseteq \mathrm{O}(2n)$, the affine map $F_{U,b}$ is a Euclidean isometry, and hence its restriction to $H$ preserves the metric $g$. Moreover, by \eqref{eq: magnetomorphism group - 2 out of 3}, $U$ is symplectic, so
\[
F_{U,b}^*\d\alpha=\d\alpha.
\]
Therefore $F_{U,b}|_H$ is a magnetomorphism of $(H,g,\d\alpha)$.

It remains to prove the converse inclusion. Let $\psi \in \mathrm{Mag}(H,g,\d\alpha)$. Then $\psi$ is in particular a Riemannian isometry of $(H,g)$. Since $H$ is strictly convex, every Riemannian isometry of $H$ extends to a Euclidean congruence; see \cite[Thm.~V]{Sacksteder}. Hence there exist $A \in \O(2n)$ and $b \in \IR^{2n}$ such that
\[
\psi = F_{A,b}|_H.
\]
In particular, $F_{A,b}(H)=H$.

We claim that $A \in \mathrm{U}(n)$. By \eqref{eq: magnetomorphism group - 2 out of 3}, it suffices to show that $A$ is symplectic. Let $v_1,v_2 \in \IR^{2n}$ be arbitrary. Since $H$ is strictly convex, its Gauss map is bijective. Hence there exists $p \in H$ such that $v_1,v_2 \in T_pH$. As $\psi$ preserves $\d\alpha$, we obtain
\[
\d\alpha(v_1,v_2)=\d\alpha(\d\psi_pv_1,\d\psi_pv_2)=\d\alpha(Av_1,Av_2).
\]
Thus $A \in \mathrm{Sp}(2n)$, and therefore $A \in \mathrm{U}(n)$.

This proves
\[
\mathrm{Mag}(H,g,\d\alpha)=\set{F_{U,b}}{ U \in \mathrm{U}(n),\, b \in \IR^{2n},\, F_{U,b}(H) = H }.
\]

Now assume that the center $x_0$ of the maximally inscribed ball is equal to $0$. Hence, by \Cref{lem:affine_rigidity_and_translation}(2), every affine map $F_{U,b}$ with $U \in \mathrm{U}(n)$ and $F_{U,b}(H)=H$ satisfies
\[
b=(\mathbbm{1}-U)x_0=0.
\]
Therefore
\[
\mathrm{Mag}(H,g,\d\alpha)=\set{U}{ U \in \mathrm{U}(n),\, U(H) = H }.
\]
This proves the proposition.
\end{proof}

\subsubsection{Ellipsoids}
\label{subsubsection: ellipsoids}

We now apply \Cref{prop: magnetomorphism group} and \Cref{prop: totally magnetic submfds - fixed point set} to the complex ellipsoids introduced at the beginning of \Cref{section: Illustration on Ellipsoids}. For this, we refer to that section for the definition of the ellipsoid $E(A)$, for $A=\mathrm{diag}(a_1,\ldots,a_n)$ with fixed sequence $0<a_1\leq \cdots \leq a_n$.

To describe the magnetomorphism group of the magnetic system $(E(A), g,\d\alpha)$, it is convenient to group together the distinct eigenvalues of $A$. Thus suppose that the distinct values among $a_1,\ldots,a_n$ are $\lambda_1<\cdots<\lambda_k$ with multiplicities $\ell_1,\ldots,\ell_k$, so that $\sum_{j=1}^k \ell_j=n$. 
Having fixed this notation, we can determine the magnetomorphism group of $E(A)$.

\begin{cor}
\label{cor: magnetomorphism group - Ellipsoids}
Let $n\geq 2$. Then the magnetomorphism group of the system $(E(A),g,\d\alpha)$ is given by
\[
\mathrm{Mag}(E(A),g,\d\alpha)
=
\mathrm{U}(\ell_1)\times\cdots\times \mathrm{U}(\ell_k).
\]
\end{cor}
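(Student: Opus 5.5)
The plan is to reduce everything to linear algebra through \Cref{prop: magnetomorphism group}. The ellipsoid $E(A)$ is a strictly convex hypersurface in $\IR^{2n}$, and it is symmetric under $z\mapsto -z$. The bounded domain it encloses is therefore centrally symmetric about the origin. For a convex body the maximal inscribed radius can be attained at several centers, but for a strictly convex ellipsoid I expect uniqueness, and this is what \Cref{lem:affine_rigidity_and_translation} uses. If $B_r(x_0)$ is maximal, then so is $B_r(-x_0)$. By convexity, the ball of radius $r$ centered at any convex combination of the two centers also lies in the domain. Strict convexity of $E(A)$ then forces $x_0=0$; alternatively one can compute directly that the inradius is $\sqrt{a_1}$ with center $0$. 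The second part of \Cref{prop: magnetomorphism group} then gives
\[
\mathrm{Mag}(E(A),g,\d\alpha)=\set{U\in\mathrm{U}(n)}{U(E(A))=E(A)}.
\]

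It remains to determine which unitary $U$ preserve the quadric $\langle A^{-1}z,z\rangle=1$. If $U(E(A))=E(A)$, then by homogeneity $U$ maps each level set $\langle A^{-1}z,z\rangle=t^2$ to itself, since these are just $tE(A)$. Hence the Hermitian forms $z\mapsto\langle A^{-1}Uz,Uz\rangle$ and $z\mapsto\langle A^{-1}z,z\rangle$ agree on all of $\C^n$. Polarization then gives $U^*A^{-1}U=A^{-1}$. Because $U$ is unitary this is equivalent to $UA^{-1}=A^{-1}U$, that is, $U$ commutes with $A$.

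A matrix commutes with the diagonal matrix $A$ exactly when it preserves each eigenspace $\bigoplus_{a_i=\lambda_j}\C e_i$, which has dimension $\ell_j$. So $U$ is block diagonal, with each block lying in $\mathrm{U}(\ell_j)$. Conversely, every such block-diagonal unitary commutes with $A$, so it preserves $E(A)$ and is therefore a magnetomorphism. This yields $\mathrm{U}(\ell_1)\times\cdots\times\mathrm{U}(\ell_k)$.

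The main obstacle I foresee is not the algebra but the hypotheses of \Cref{prop: magnetomorphism group}. These are the uniqueness of the inscribed-ball center, which I handle by the symmetry and convex-combination argument above, and the reliance on Sacksteder's rigidity. The rigidity requires $n\ge2$, so that the hypersurface has dimension at least $3$; this explains the standing assumption $n\ge 2$. The passage from "preserves the level set" to "preserves the quadratic form" is immediate from $\IR$-linearity of $U$ and positive homogeneity.
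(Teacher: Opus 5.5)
Your proof is correct and follows the paper's route: you use \Cref{prop: magnetomorphism group} to reduce to unitary maps preserving $E(A)$, and then show these are exactly the block-diagonal unitaries adapted to the eigenspaces of $A$. You also supply two steps the paper only asserts, namely that the maximally inscribed ball is centered at the origin and the homogeneity/polarization argument giving $U^*A^{-1}U=A^{-1}$, i.e.\ that $U$ commutes with $A$.
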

\begin{rem}
\label{rem: magnetomorphisms sphere}
We recover \cite[Cor.~1.5]{ABM} from the special case $k=1$ in \Cref{cor: magnetomorphism group - Ellipsoids}, that is, when $A$ is a multiple of the identity and $E(A)$ is a sphere.
\end{rem}
\begin{proof}
Since $E(A)$ is centered at the origin, \Cref{prop: magnetomorphism group} implies
\[
\mathrm{Mag}(E(A),g,\d\alpha)
=
\set{U}{U \in \mathrm{U}(n),\, U(E(A)) = E(A)}.
\]
It therefore remains to characterize those unitary maps preserving $E(A)$.

We claim that for $U \in \mathrm{U}(n)$ the condition $U(E(A)) = E(A)$ holds if and only if there exist $U_j \in \mathrm{U}(\ell_j)$, $j=1,\ldots,k$, such that
\[
U=\mathrm{diag}(U_1,\ldots,U_k).
\]
This follows from the fact that an orthogonal matrix which leaves $E(A)$ invariant must preserve the eigenspace decomposition of $A$.
\end{proof}

We next apply \Cref{prop: totally magnetic submfds - fixed point set} to obtain totally magnetic submanifolds of $E(A)$.

\begin{cor}
\label{cor: totally magnetic submfds - Ellipsoids}
Let $V_1 \subseteq \IC^{\ell_1}, \ldots, V_k \subseteq \IC^{\ell_k}$ be complex linear subspaces. Then
\begin{equation}
\label{eq: totally magnetic submfds - Ellipsoids}
E(A)\cap (V_1\oplus\cdots\oplus V_k)
\end{equation}
is a totally magnetic submanifold of $E(A)$.
\end{cor}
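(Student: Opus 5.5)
We realize the set in \eqref{eq: totally magnetic submfds - Ellipsoids} as a connected component of the fixed-point set of a suitable family of magnetomorphisms, and then apply \Cref{prop: totally magnetic submfds - fixed point set}. By \Cref{cor: magnetomorphism group - Ellipsoids}, every block-diagonal unitary matrix $\mathrm{diag}(U_1,\ldots,U_k)$ with $U_j\in\mathrm{U}(\ell_j)$ restricts to a magnetomorphism of $(E(A),g,\d\alpha)$. For each $j$, let $R_j\in\mathrm{U}(\ell_j)$ be the complex reflection in $V_j$: it is $\mathbbm{1}$ on $V_j$ and $-\mathbbm{1}$ on the Hermitian orthogonal complement $V_j^{\perp}\subseteq\IC^{\ell_j}$. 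Set
\[
\mathcal{M}:=\set{\mathrm{diag}(\mathbbm{1},\ldots,R_j,\ldots,\mathbbm{1})\big|_{E(A)}}{j=1,\ldots,k}\subseteq \mathrm{Mag}(E(A),g,\d\alpha).
\]

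First, we compute the common fixed set in $\IC^n=\IC^{\ell_1}\oplus\cdots\oplus\IC^{\ell_k}$. A vector $z=(z^{(1)},\ldots,z^{(k)})$ is fixed by the $j$-th element exactly when $R_jz^{(j)}=z^{(j)}$, that is, when $z^{(j)}\in V_j$. Hence
\[
\mathrm{Fix}(\mathcal{M})=E(A)\cap(V_1\oplus\cdots\oplus V_k).
\]
If some $V_j$ is trivial or some $V_j^\perp$ is trivial, the corresponding reflection is $-\mathbbm{1}$ or $\mathbbm{1}$, and the formula still holds. Then \Cref{prop: totally magnetic submfds - fixed point set} shows that every connected component of this set is a totally magnetic submanifold of $E(A)$.

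Second, we check that the set is connected, or empty, so that the component statement is the full statement. Put $W:=V_1\oplus\cdots\oplus V_k$, a complex subspace of $\IC^n$ of complex dimension $m$. Then $E(A)\cap W=\set{w\in W}{\langle A^{-1}w,w\rangle=1}$ is the level set of a positive definite quadratic form on $W\cong\IR^{2m}$. It is therefore an ellipsoid diffeomorphic to $\SS^{2m-1}$, which is connected for $m\geq1$ and empty for $m=0$.

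The main point to be careful about is the use of \Cref{cor: magnetomorphism group - Ellipsoids}, in particular that the reflections really preserve $E(A)$. This does not depend on the full classification, however. Each $R_j$ acts only on the $\lambda_j$-eigenspace of $A$, so it commutes with $A$ and preserves $\langle A^{-1}z,z\rangle$. Being unitary, it also preserves $g$ and $\d\alpha$. This direct verification suffices, and the remaining steps are routine.
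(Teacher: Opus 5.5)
Your proof is correct, and it takes a slightly more direct route than the paper's. Both arguments rest on \Cref{prop: totally magnetic submfds - fixed point set}; the difference is which magnetomorphisms you fix.

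The paper uses only the coordinate sign flips $D_j=\mathrm{diag}(1,\ldots,1,-1,1,\ldots,1)$. The proposition therefore first yields only the coordinate pieces $N_J=\{z\in E(A)\mid z_j=0\ \forall j\in J\}$. The general set is then obtained as $U(N_J)$ for a block-diagonal unitary $U=\mathrm{diag}(U_1,\ldots,U_k)$. This step uses that magnetomorphisms carry totally magnetic submanifolds to totally magnetic ones, and that every $V_1\oplus\cdots\oplus V_k$ is the image of a coordinate subspace adapted to the eigenspace blocks of $A$.

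You instead take the complex reflections $R_j$ in $V_j$. This realizes $E(A)\cap(V_1\oplus\cdots\oplus V_k)$ directly as a single fixed-point set, so the transport step disappears.

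You also check that $E(A)\cap W$ is an ellipsoid in $W\cong\IR^{2m}$, hence connected or empty. That lets the component-wise conclusion of the proposition cover the whole set. The paper passes over this point silently. That is harmless, since total magneticity is a local condition, but your check also identifies the set as a closed connected submanifold of dimension $2m-1$.

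Your remark that only the easy inclusion is needed applies equally to the paper's argument. That inclusion says block-diagonal unitaries commute with $A$, preserve $E(A)$, and are Euclidean isometries preserving $\alpha$. It makes both proofs independent of the reverse inclusion in \Cref{cor: magnetomorphism group - Ellipsoids}, which the paper only sketches.

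In exchange, the paper's route uses only the simplest diagonal magnetomorphisms. It also presents these submanifolds as the orbits of the coordinate ones under $\mathrm{U}(\ell_1)\times\cdots\times\mathrm{U}(\ell_k)$.
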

\begin{proof}
For every subset $J \subseteq \{1,\ldots,n\}$, let
\[
N_J := \set{(z_1,\ldots,z_n) \in E(A)}{z_j = 0 \quad \forall\, j \in J}.
\]
Then $N_J$ is the fixed-point set of the collection of restrictions to $E(A)$ of the diagonal matrices
\[
D_j:=\mathrm{diag}(1,\ldots,1,-1,1,\ldots,1), \qquad j\in J,
\]
and each $D_j|_{E(A)}$ is a magnetomorphism of $(E(A),g,\d\alpha)$ by \Cref{cor: magnetomorphism group - Ellipsoids}. Hence, \Cref{prop: totally magnetic submfds - fixed point set} shows that $N_J$ is a totally magnetic submanifold of $E(A)$.

Now let
\[
U=\mathrm{diag}(U_1,\ldots,U_k),
\qquad
U_j\in\mathrm{U}(\ell_j).
\]
Again by \Cref{cor: magnetomorphism group - Ellipsoids}, the restriction of $U$ to $E(A)$ is a magnetomorphism. Therefore $U(N_J)$ is a totally magnetic submanifold of $E(A)$. Since every subset of the form \eqref{eq: totally magnetic submfds - Ellipsoids} is the image of some $N_J$ under such a block-diagonal unitary map, the claim follows.
\end{proof}

In the case of the sphere, it was proved in \cite[Cor.~1.5]{ABM} that every closed, connected totally magnetic submanifold of positive dimension is the intersection of the sphere with a complex linear subspace. This motivates the following conjecture.

\begin{conj}
\label{conj: totally magnetic submfds - Ellipsoids}
Every closed, connected totally magnetic submanifold of $E(A)$ of positive dimension is of the form \eqref{eq: totally magnetic submfds - Ellipsoids}.
\end{conj}

\section{Outlook}
\label{section: Outlook}

The results of this paper suggest several directions for further research. A particularly promising one arises from the ambient-space formulation of the magnetic geodesic equation for hypersurfaces.

More precisely, let $(M,g,\sigma)$ be a magnetic system with Levi-Civita connection $\nabla$ and Lorentz force $Y:TM\to TM$, and let $\Sigma=f^{-1}(c)$ be a regular level set of a smooth function $f\in C^\infty(M)$. Denote by $\nabla f$ the gradient of $f$ with respect to $g$. Then a curve $\gamma:(a,b)\to \Sigma$ is a magnetic geodesic of the induced magnetic system on $\Sigma$ if and only if it satisfies the ambient equation
\[
\nabla_{\dot \gamma} \dot \gamma
=
Y_\gamma(\dot \gamma)
+
\frac{g_\gamma\big(\dot \gamma,\, Y_\gamma(\nabla f(\gamma))-\nabla_{\dot \gamma}\nabla f\big)}{|\nabla f(\gamma)|^2}\,\nabla f(\gamma).
\]
This formula fits into the framework of \cite{Terek_submanifold_compatibility_equations_magnetic} and makes it possible to study magnetic geodesics on hypersurfaces directly in terms of ambient data.

In the case of complex ellipsoids, this leads to a concrete second-order ODE in the ambient space $\IC^n$. Ambient formulations of the magnetic geodesic equation have already proved useful in related situations. In \cite{ABM}, they led to several structural insights into the dynamics, and these ideas were subsequently used by the second author in \cite{M24} in an application to a nonlinear PDE.

It would therefore be interesting to investigate whether, in the setting of \Cref{subsection: magnetic geometry- illustration}, the ambient ODE for magnetic geodesics on ellipsoids leads to comparable new insights. This may include a better analytical understanding of the dynamics, effective numerical approaches to the magnetic flow, or possible links with nonlinear PDEs, for instance of elliptic version of a Hunter--Saxton type equation.

More broadly, the examples studied in this paper indicate that magnetic geometry on hypersurfaces may provide a useful bridge between dynamical, geometric, and analytic questions. Exploring this interaction further seems to be a natural direction for future work.

\newpage
\bibliographystyle{abbrv}
	\bibliography{ref}
\end{document}